\newtheorem{teor}{Theorem}[section]
\newtheorem*{teo}{Theorem}
\newtheorem{lemma}[teor]{Lemma}
\newtheorem{coroll}[teor]{Corollary}
\newtheorem{pro}[teor]{Proposition}
\newtheorem*{claim*}{Claim}
\theoremstyle{definition}
\newtheorem*{rmk}{Remark}
\newtheorem{ese}[teor]{Example}
\newcommand{\ls}{\leqslant }
\newcommand{\gs}{\geqslant }
\newcommand{\call}[1]{\mathcal{#1}}
\newcommand{\got}[1]{\mathfrak{#1}}
\newcommand{\g}[1]{\ensuremath{\mathbb{#1}}}
\newcommand{\lb}{[\hspace{-0.07cm}[}
\newcommand{\rb}{]\hspace{-0.07cm}]}
\newcommand{\Tt}{\tau}
\title{  {\bf Artinian level algebras of low socle degree}
\footnote{ 2000 {\it Mathematics Subject Classification.} Primary 13H10; Secondary 13H15; 14C05.
\newline
\indent{\it Keywords and Phrases.} Artinian level local algebras, Inverse system, Isomorphism classes, Hilbert 
functions.}}
\author{Alessandro De Stefani}
\date{  }
\begin{document}

\maketitle

\begin{abstract}
In this paper we study Hilbert functions and isomorphism classes of Artinian level local algebras via Macaulay's inverse system. Upper and lower bounds concerning numerical functions admissible for level algebras of fixed type and socle degree are known. For each value in this range we exhibit a level local algebra with that Hilbert function, provided that the socle degree is at most three. Furthermore we prove that level local algebras of socle degree three and maximal Hilbert function are graded. In the graded case the extremal strata have been parametrized by Cho and Iarrobino. 
\end{abstract}

\section{Introduction}

Throughout this paper we consider Artinian local (or graded) $k$-algebras, where $k$ is an algebrically closed field of characteristic zero.

Recall that if $(A,\got{m}, k)$ is an Artinian local $k$-algebra, then there exists an integer $s$ such that $\got{m}^s \ne0$ and $\got{m}^{s+1} = 0$. This integer $s$ is called {\it socle degree} of $A$  and  $A$ is said to be {\it $s$-level of type $\Tt$} if
\begin{equation*}
{\rm Soc}(A) := 0:_A \got{m} = \got{m}^s \ \ \mbox{ and } \ \ \dim_k {\rm Soc}(A) = \Tt.
\end{equation*}
An Artinian local algebra is Gorenstein if it has type $\Tt =1$, and Gorenstein algebras are clearly level. Level algebras can be also defined in higher dimension and, in this case,  $A$ is level if it is Cohen-Macaulay and an Artinian reduction (hence all Artinian reductions) of $A$ is level.  The definition extends also to modules, see for example \cite{Boij} and \cite{Soder}.

The Hilbert function of $A$ in degree $j \gs 0$ is $HF_A(j) = \dim_k  \got{m}^j/  \got{m}^{j+1}$ and  it is also the Hilbert function of the corresponding associated graded ring $G= \oplus_{j\gs 0}  \got{m}^j/  \got{m}^{j+1} $ which is a standard graded $k$-algebra. The sequence $(h_0,h_1,\ldots,h_s)$, where $s$ is the socle degree of $A$ and  $h_j  = HF_A(j)$  for each $j=1,\ldots,s$, is called {\it h-vector} of $A.$  For the aims of this paper, $h$-vector means a numerical sequence which comes from the Hilbert function of some algebras, equivalently  it satisfies the conditions of Macaulay's Theorem.  
\vspace{2mm}

In 1916 Macaulay establishes a one-to-one correspondence, called Macaulay's inverse system, between Artinian Gorenstein algebras and suitable polynomials. This correspondence has been deeply studied in the graded case, among other authors, by A. Iarrobino in a long series of papers, and it can be generalized to Artinian $s$-level local algebras. We present the main facts that will be used in this paper. Some of them are well known, nevertheless they are included in Section 2 for sake of completeness.

The use of Macaulay's inverse system gives an effective approach to the study of isomorphism classes of Artinian algebras. We apply this method to Artinian level local algebras with maximal Hilbert function, which are called {\it compressed level}, that were deeply studied by Iarrobino \cite{Iarr} and Fr{\"o}berg-Laksov \cite{FRLK}. One of the main results in this paper is Theorem \ref{mainteor2}:

\begin{teo}
Let $A$ be a compressed level local algebra of socle degree $3$. Then $A$ is graded, that is $A \simeq G$.
\end{teo}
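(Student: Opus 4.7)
The plan is to work in the inverse system picture of Section 2 and to produce a homogeneous set of Macaulay generators for the inverse system of $A$; this will force $A \simeq G$. Write the inverse system of $A$ as $W = \langle F_1, \ldots, F_\tau\rangle \subseteq P = k[y_1, \ldots, y_n]$, where $n = \dim_k \got{m}/\got{m}^2$ is the embedding dimension. Since $A$ is level of socle degree $3$, the $F_i$ may be chosen so that their top-degree components $F_i^{(3)}$ are linearly independent; decompose $F_i = F_i^{(3)} + F_i^{(2)} + F_i^{(1)} + F_i^{(0)}$ into homogeneous parts. The associated graded ring $G$ corresponds to the homogeneous inverse system $W_{\mathrm{top}} := \langle F_1^{(3)}, \ldots, F_\tau^{(3)}\rangle$, and the goal is to show that, after suitable replacements of the $F_i$ by elements of $W$, one may take $F_i = F_i^{(3)}$.

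The pivotal step is to show that the level hypothesis, specifically that no element of $\got{m}^2\setminus\got{m}^3$ belongs to $\mathrm{Soc}(A)$, translates via Macaulay duality into the inclusion
\[
F_i^{(2)} \;\in\; (W_{\mathrm{top}})_2 \;=\; \sum_{j=1}^{\tau} R_1 \circ F_j^{(3)} \qquad \text{for every } i.
\]
Unwinding the definitions: an element $a\in R_2$ projects to a non-zero socle element of $\got{m}^2/\got{m}^3$ exactly when $a\circ F_i^{(3)} = 0$ for all $i$ (so that $\got{m}a$ becomes zero in $A$) while $a\circ F_j^{(2)}\neq 0$ for some $j$ (so that $a$ is non-zero in $A$). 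The absence of such $a$ is the inclusion $\bigcap_i \mathrm{Ann}_R(F_i^{(3)})_2 \subseteq \bigcap_i \mathrm{Ann}_R(F_i^{(2)})_2$, whose orthogonal formulation in $P_2$ is the displayed inclusion. The analogous statements $F_i^{(1)}\in(W_{\mathrm{top}})_1$ and $F_i^{(0)}\in(W_{\mathrm{top}})_0$ hold trivially because $h_0(G)=1$ and $h_1(G)=n$, whence $(W_{\mathrm{top}})_0 = P_0$ and $(W_{\mathrm{top}})_1 = P_1$.

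With these inclusions in place, I would reduce iteratively for $k = 2,1,0$: write $F_i^{(k)} = \sum_j \theta_{ij}\circ F_j^{(3)}$ with $\theta_{ij}\in R_{3-k}\subseteq \got{m}$, and replace each $F_i$ by $F_i - \sum_j \theta_{ij}\circ F_j$. The subtracted element lies in $W$, so $W$ is preserved; the top form $F_i^{(3)}$ is unchanged because $\theta_{ij}$ has positive degree; the degree-$k$ component of $F_i$ is killed; and the generating property is preserved because $\theta_{ii}\in\got{m}$ makes $1-\theta_{ii}$ a unit in $R$. Each reduction may reintroduce terms in degrees $<k$, but the next round of the same process absorbs them, so after finitely many passes the generators satisfy $F_i = F_i^{(3)}$; then $W = W_{\mathrm{top}}$ is a graded subspace of $P$ and $A\simeq G$. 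The main obstacle is the key equivalence in the second paragraph: it relies on the observation that, in socle degree $3$, the only level obstruction to being graded is concentrated in the single intermediate degree $2$, which is accessible to iterative absorption — in socle degree $\geq 4$, the analogous argument would require coupled conditions in several degrees simultaneously and does not reduce to a single dualisable subspace inclusion.
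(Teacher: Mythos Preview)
There is a genuine gap in your key step. Your argument, as written, never invokes the compressed hypothesis: the claimed inclusion $F_i^{(2)} \in \sum_j R_1 \circ F_j^{(3)}$ is derived solely from levelness, so if it were correct it would prove that \emph{every} $3$-level local algebra is graded. This is false --- the paper itself exhibits at the start of Section~3 a $3$-level algebra with $h$-vector $(1,4,5,6)$ that is not isomorphic to its associated graded ring.

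The specific error is in the clause ``while $a \circ F_j^{(2)} \neq 0$ for some $j$ (so that $a$ is non-zero in $A$).'' Given $a \in R_2$ with $a \circ F_j^{(3)} = 0$ for all $j$, this condition indeed says $\bar a \neq 0$ in $A$; but it does \emph{not} say $\bar a \notin \got m^3$. Levelness only forces $\bar a \in \mathrm{Soc}(A) = \got m^3$, and a homogeneous degree-$2$ element of $R$ can perfectly well represent a nonzero class in $\got m^3$ (this happens exactly when $a \in (I^*)_2 \smallsetminus I$). So the ``absence of such $a$'' does not follow from levelness, and your dual inclusion need not hold.

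More decisively, the conclusion $W = W_{\mathrm{top}}$ that your procedure would yield is false even under the compressed hypothesis. Take $\tau = 1$, embedding dimension $2$, and $f = x^3 + y^3 + xy$: then $A_f$ is compressed Gorenstein with $h$-vector $(1,2,2,1)$, yet $R_1 \circ (x^3+y^3) = \langle x^2, y^2\rangle_k$ does not contain $xy$, so $\langle f\rangle_R \neq \langle x^3+y^3\rangle_R$ and your absorption step cannot run. The theorem asserts only $A_{\underline f} \simeq A_{\underline F}$, and the paper obtains this not by rewriting generators inside $W$ but by constructing a $k$-algebra automorphism $\varphi$ of $R/\call M^4$ (with identity Jacobian) carrying $\underline F$ to $\underline f$; the compressed hypothesis enters as the maximal-rank condition on the matrix $\Delta(\underline F)$ of first partials, which is precisely what makes the linear system determining $\varphi$ solvable. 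Your idea does, however, recover the easy case of the paper's argument --- when $h_2 = \binom{m+1}{2}$, one has $\sum_j R_1 \circ F_j^{(3)} = P_2$ for dimension reasons and the absorption goes through.
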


This theorem extends a result  proved by J. Elias and M.E. Rossi in \cite{RossiElias} in the case of Gorenstein algebras of socle degree $3.$ 
\vspace{2mm}

Macaulay's inverse system gives also a way to read the Hilbert function of an Artinian algebra. Macaulay characterized the Hilbert functions admissible for graded algebras, but very few is known if we restrict the investigation to special classes. Usually the problem becomes even more difficult if we consider classes of local algebras. In fact, in general, the associated graded ring $G$ does not reflect good properties of the original local algebra $A$. For instance $G$ is not necessarily level even if $A$ is level.

A. Iarrobino and independently G. Valla characterize the $h$-vectors admissible for level {\it graded} algebras of codimension two. This result inspired the work of V. Bertella \cite{Bertella} who characterizes the $h$-vectors of level {\it local} algebras of codimension two. In higher codimension, except for a result by Stanley characterizing $h$-vectors of Gorenstein graded algebras of codimension three \cite{Stan}, very little is known for level algebras of type $\Tt \gs 2$. A. Geramita and others  develop in  \cite{GeramitaMigliore} several methods for studying the Hilbert function of graded level algebras, but they give a list of admissible $h$-vectors which completes the investigation only for socle degree $s \ls 5$, or socle degree $s=6$ of type $\Tt=2$. Other results of Boij, Migliore, Mir\`o Roig, Nagel and Zanello enlarge the interest concerning this topic and    new and interesting  methods are introduced (see \cite{Zanello} or \cite{MiglioreZanello}).  Cho and Iarrobino prove lower and upper bounds  for the Hilbert functions of $s$-level standard graded $k$-algebras of given type and they describe the extremal strata (see \cite{ChoI}, Theorem 1.8).

In the local setting the literature is not so rich. For instance the $h$-vectors admissible for Gorenstein local algebras of codimension three are unknown, even if we restrict to complete intersections.

In Theorem \ref{mainteor} we characterize the $h$-vectors of level local $k$-algebras with socle degree $s \ls 3$ and of any codimension. 
\begin{teo}
Let $H=(1,m,n,\Tt)$ be an $h$-vector. Then $H$ is the $h$-vector of an Artinian level local algebra if and only if $n \ls \Tt m$.
\end{teo}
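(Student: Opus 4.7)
Necessity is straightforward. Suppose $A$ is a level local algebra with $h$-vector $(1,m,n,\tau)$ and socle degree $3$. The level hypothesis $\mathrm{Soc}(A)=\mathfrak{m}^3$ implies that every $x \in \mathfrak{m}^2 \setminus \mathfrak{m}^3$ satisfies $x\mathfrak{m} \neq 0$, since otherwise $x$ would be a socle element lying outside $\mathfrak{m}^3$. Hence the multiplication pairing $\mathfrak{m}^2/\mathfrak{m}^3 \otimes_k \mathfrak{m}/\mathfrak{m}^2 \to \mathfrak{m}^3$ has trivial left radical, inducing an injection $\mathfrak{m}^2/\mathfrak{m}^3 \hookrightarrow \mathrm{Hom}_k(\mathfrak{m}/\mathfrak{m}^2,\, \mathfrak{m}^3)$. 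Comparing dimensions yields $n \leq m\tau$.

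For sufficiency, given a Macaulay admissible $H=(1,m,n,\tau)$ with $n \leq \tau m$, I realise $H$ as the $h$-vector of an Artinian level local algebra via Macaulay's inverse system. I look for $A = R/\mathrm{Ann}_R(F_1,\ldots,F_\tau)$, where $R=k[[x_1,\ldots,x_m]]$ acts by contraction on the divided power module $D$, and each $F_k \in D$ has leading form of degree exactly $3$. Such a choice automatically yields a level algebra of socle degree $3$ and type $\tau$, so the task reduces to arranging that the $R$-submodule generated by $F_1,\ldots,F_\tau$ has filtration-graded dimensions $(1,m,n,\tau)$.

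The proof naturally divides into cases by the value of $n$. If $n$ achieves its compressed maximum $\min(\tau m,\binom{m+1}{2})$, then $\tau$ generic homogeneous cubic forms $F_k=F_k^{(3)}$ suffice: their partial derivatives span a subspace of $D_2$ of dimension $n$, and their second partials span all of $D_1$. This reproduces the compressed level case treated by Iarrobino and Fr\"oberg--Laksov. For smaller $n$, a purely homogeneous construction may fail to be level—some graded algebras with Hilbert function $(1,m,n,\tau)$ can have socle strictly containing $\mathfrak{m}^3$. In that regime one exploits the flexibility of the local inverse system: set $F_k = F_k^{(3)} + F_k^{(\leq 2)}$, where the leading cubics are chosen so that the associated graded ring $\mathrm{gr}(A)$ has the desired Hilbert function, and the lower-degree terms are chosen so that $A$ itself is level, i.e., so that any potential socle elements of $\mathrm{gr}(A)$ in degrees $<3$ lift to non-socle elements of $A$.

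The main obstacle is the coordinated construction of leading forms and lower-degree corrections for every admissible $(n,\tau)$ below the compressed maximum. The cubics $F_k^{(3)}$ must be chosen so that $\dim R_1\cdot\langle F_1^{(3)},\ldots,F_\tau^{(3)}\rangle = n$ and $R_2\cdot\langle F_1^{(3)},\ldots,F_\tau^{(3)}\rangle = D_1$, while the corrections $F_k^{(\leq 2)}$ must enforce levelness without reducing the embedding dimension of $A$ below $m$ or altering its Hilbert function. Meeting these constraints will likely require a case analysis—for instance by writing $n=qm+r$ with $0\leq r<m$ and distributing the $n$-dimensional space of new degree-two partials among the $\tau$ generators—together with an explicit verification that the resulting inverse-system module has the prescribed filtration dimensions.
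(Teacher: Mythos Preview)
Your necessity argument is correct and in fact cleaner than the paper's: the paper routes the inequality through the $Q$-decomposition, observing that $Q(0)=A_{\underline{F}}$ is graded level with the same $h_2$, and then bounds $n$ by the number of first partials of $F_1,\ldots,F_\tau$. Your pairing argument gets there in one step.

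The sufficiency direction, however, is not a proof but a plan. You correctly identify the mechanism (inverse system generators $F_k = F_k^{(3)} + F_k^{(\le 2)}$, leading cubics controlling $h_2$ and $h_3$, lower-degree tails repairing embedding dimension and levelness), and you correctly anticipate that a case split on the relative sizes of $m,n,\tau$ is needed. But you stop exactly where the paper's work begins. The entire content of the sufficiency proof is the explicit construction of the $F_k$'s in each regime, together with the verification that the resulting module has the prescribed filtration dimensions. The paper does this in three separate propositions: one for $m>n\ge\tau$ (non-homogeneous, using $x_\tau^3+\cdots+x_n^3+x_{n+1}^2+\cdots+x_m^2$ to force the embedding dimension), one for $\max\{\tau,m\}\le n\le \tau m$ (homogeneous, via a careful combinatorial scheme on monomial sets $\mathcal{D}_j=\{x_i^2x_{i+j}\}$), and one for $n<\tau$ (mixed, using Macaulay's bound $\tau\le n^{\langle 2\rangle}$ to control how many cubic monomials can be packed over a fixed $n$-dimensional space of quadrics). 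None of this is routine; in particular, your suggestion to write $n=qm+r$ and ``distribute'' is the right instinct for the middle case, but making the partial derivatives of the chosen cubics span a space of dimension \emph{exactly} $n$ while keeping all $m$ variables involved requires the specific monomial bookkeeping the paper carries out.

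In short: the strategy is sound and matches the paper's, but what you have written would not be accepted as a proof. The missing piece is precisely the constructive case analysis you defer in your last paragraph.
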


This result  does not hold if we additionally require the algebra to be graded. In this setting indeed a possible characterization is still unknown. The proof we give is constructive and, in some cases, our strategy leads to an Artinian algebra which is actually graded, or even  generated by monomials. This aspect could become relevant for studying problems concerning Lefschetz properties (see \cite{Zanello} and \cite{MiglioreZanello}) or lifting algebras to zero dimensional schemes by deformation arguments.
\vskip 2mm
All examples and computations in this paper are made by using CoCoA \cite{Cocoa}.

\section{Macaulay's inverse system}
In this section we collect  some results on Macaulay's inverse system. In particular we translate the study of the Hilbert functions  and of the isomorphism classes of level local $k$-algebras  in terms of the polynomials of the inverse system. Most of the results are well known in the literature especially in the graded setting, while in the case of local algebras this approach was not explored as much. We refer to  the paper of Emsalem \cite{Emsalem}  and, in the Gorenstein case, to the paper of Iarrobino \cite{Iarrobino} and of Elias and  Rossi \cite{RossiElias}.   

 \par 
 \vskip 3mm
Throughout this paper $R = k \lb x_1,\ldots, x_m \rb$ will be the formal power series ring over an algebraically closed field $k$ of characteristic zero, while $P = k[x_1,\ldots,x_m]$ will be the polynomial ring over $k$ with the same number of variables as $R$. For $i \in N$ we will use the notation $P_{\ls i}:= \{f \in P: \deg f \ls i\}$ and $P_{<i}:=\{f \in P: \deg f <i\}$.

\subsection{Macaulay's correspondence and the Hilbert Function}
The polynomial ring $P$ is an $R$-module under  the following action:
\begin{eqnarray*}
\circ : R \times P \to  \ \ \ \ P \ \ \ \ \ \ \ \ \ \ \ \ \ \\
(g,f) \mapsto g(\partial_1,\ldots,\partial_m)(f)
\end{eqnarray*}
where $\partial_i$ denotes the formal partial derivative with respect to $x_i$.  
Starting from the $R$-module structure of $P$ define the following bilinear map:
\begin{eqnarray} \label{dual}
\begin{array}{c}
\langle \ , \ \rangle \ : R \times P \to \ \ k\ \ \ \ \hskip 1cm \ \ \  \ \ \  \\
(g,f) \mapsto (g \circ f)(0)
\end{array}
\end{eqnarray} 
Let $I$ be a zero dimensional ideal of $R$ (i.e. such that $R/I$ is an Artinian algebra), then
\begin{equation*}
I^\bot := \{f \in P :  \langle I,f\rangle = 0\}
\end{equation*}
is a finitely generated $R$-submodule of $P$. Assume $I^\bot = \langle f_1,\ldots,f_t \rangle_R$, where $\langle\ldots \rangle_R$ denotes the $R$-submodule of $P$ generated by a set of polynomials.  Because of the definition of   $\circ$, $I^\bot$ is stable under taking derivatives and it  is  a  $k$-vector space of dimension the length of $R/I$. Also, as a $k$-vector space, $I^\bot$ is generated by $f_1,\ldots,f_t$ and by all their derivatives with respect to the variables $x_1,\ldots,x_m$. More explicitly, given $\underline{f} = \{f_1,\ldots,f_t\}$ and denoting with $\partial^h \underline{f}$ the set of all the $h$-th derivatives of $f_1,\ldots,f_t$ (we will write $\partial \underline{f}$ for $\partial^1 \underline{f}$), we get: 
\begin{equation*} 
I^\bot = \langle \underline{f}\rangle_R = \langle \partial^h \underline{f}  : k = 0,\ldots, \deg(f) \rangle_k.
\end{equation*}
Here $\langle \ldots\rangle_k$ means the $k$-vector space generated by a set of polynomials. If $M$ is a finitely generated $R$-submodule of $P$, then one can define the following ideal of $R$:
\begin{equation*}
Ann_R(M) := \{g \in R : \langle g,M\rangle = 0\}.
\end{equation*}

\begin{rmk}
Using the properties of $I$ as an ideal of $R$, one can prove that
$
I^\bot = \{f \in P : I \circ f =~0\}.
$
Similarly, since $M$ is a $R$-submodule of $P$, one has $Ann_R(M) = \{g \in R : g \circ M = 0\}$.
\end{rmk}
Macaulay's inverse system establishes a {\it  one-to-one correspondence between zero dimensional ideals $I$ of $R$ and finitely generated $R$-submodules $M$ of $P  $}  sending $I$ to $I^\bot$ and conversely $M$  to $Ann_R(M)$. We can restrict this  correspondence to ideals $I$ of $R$ such that $R/I$ is $s$-level of type $\Tt$. This   theory  is  well known and it is a consequence of Matlis duality.  A crucial fact is the following lemma whose proof is a natural recasting  of Lemma 1.1 in \cite{Iarrobino}. 
\begin{lemma}
\label{lemmainv1}
Let $s,\Tt$ be positive integers and let $\varphi : R \to k^\Tt$ be a $k$-linear homomorphism such that $\varphi(\call{M}^s) = k^\Tt$ and $\varphi(\call{M}^{s+1}) = 0$, where $\call{M} = (x_1,\ldots,x_m)$ denotes the maximal ideal of $R$. Consider $I = \{h \in R : \varphi(Rh) = 0\}$, then $R/I$ is an Artinian $s$-level local algebra of type $\Tt$.
\end{lemma}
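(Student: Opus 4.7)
The plan is to verify the four requirements one at a time: that $I$ is an ideal, that $R/I$ is Artinian (and local), that its socle degree is exactly $s$, and that its socle is $\bar{\call{M}}^s$ of dimension $\Tt$. The organizing idea is that the condition $\varphi(\call{M}^{s+1})=0$ will force $\call{M}^{s+1}\subseteq I$, while the condition $\varphi(\call{M}^s)=k^\Tt$ will be used, at the end, to pin down both the socle degree and the type by constructing an explicit $k$-linear isomorphism between the socle and $k^\Tt$.

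First I would check that $I$ is an ideal. Closure under sums is immediate from the linearity of $\varphi$, and if $h\in I$ and $r\in R$ then $R(rh)\subseteq Rh$, so $\varphi(R(rh))\subseteq\varphi(Rh)=0$. Next, if $h\in\call{M}^{s+1}$ then $Rh\subseteq\call{M}^{s+1}$, so $\varphi(Rh)\subseteq\varphi(\call{M}^{s+1})=0$; hence $\call{M}^{s+1}\subseteq I$. Consequently $R/I$ is a quotient of the Artinian local ring $R/\call{M}^{s+1}$, so $R/I$ is Artinian and local with maximal ideal $\bar{\call{M}}$.

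To identify the socle degree I would observe that $\bar{\call{M}}^{s+1}=0$, and that if one had $\call{M}^s\subseteq I$ then taking $r=1$ in the definition of $I$ would give $\varphi(\call{M}^s)=0$, contradicting $\varphi(\call{M}^s)=k^\Tt$. Hence $\bar{\call{M}}^s\neq 0$ and the socle degree of $R/I$ is exactly $s$. Finally, I would translate membership in the socle: since $R=k+\call{M}$, we have
\[
\bar h\in\mathrm{Soc}(R/I)\iff \call{M}h\subseteq I\iff \varphi(R\cdot\call{M}h)=0\iff \varphi(\call{M}h)=0.
\]

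Now comes the step I expect to carry the real content: computing the type and showing levelness simultaneously by defining
\[
\alpha:\mathrm{Soc}(R/I)\longrightarrow k^\Tt,\qquad \alpha(\bar h)=\varphi(h).
\]
This is well defined because $h\in I$ implies $\varphi(h)=\varphi(1\cdot h)=0$. For injectivity, if $\bar h\in\mathrm{Soc}(R/I)$ with $\alpha(\bar h)=0$, then $\varphi(Rh)=\varphi(kh+\call{M}h)=0$, so $h\in I$. For surjectivity, given $y\in k^\Tt=\varphi(\call{M}^s)$, pick $h\in\call{M}^s$ with $\varphi(h)=y$; then $\call{M}h\subseteq\call{M}^{s+1}\subseteq I$, so $\bar h\in\mathrm{Soc}(R/I)$ and $\alpha(\bar h)=y$. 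This same surjectivity argument, combined with the already established injectivity, shows that every socle element admits a representative in $\call{M}^s$, giving $\mathrm{Soc}(R/I)\subseteq\bar{\call{M}}^s$; the reverse inclusion is automatic from $\bar{\call{M}}^{s+1}=0$. Therefore $\mathrm{Soc}(R/I)=\bar{\call{M}}^s$ and $\dim_k\mathrm{Soc}(R/I)=\Tt$, so $R/I$ is an $s$-level local algebra of type $\Tt$. The only subtle point in the whole argument is recognizing that the socle membership condition $\call{M}h\subseteq I$ unwinds cleanly to $\varphi(\call{M}h)=0$; everything else is bookkeeping.
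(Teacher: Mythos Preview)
Your proof is correct. The paper does not actually write out a proof of this lemma; it merely remarks that the argument is ``a natural recasting of Lemma 1.1 in \cite{Iarrobino}'', and your verification---checking that $I$ is an ideal containing $\call{M}^{s+1}$, then building the explicit isomorphism $\alpha:\mathrm{Soc}(R/I)\to k^\Tt$ via $\bar h\mapsto\varphi(h)$ to pin down both levelness and the type---is exactly the standard recasting the paper has in mind.
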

 \vskip 3mm
 
Consider now the $R$-module $M = \langle f_1,\ldots,f_\Tt \rangle_R$ with $\deg(f_i)=s$ for all $i$ such that the corresponding leading forms, say $F_1,\ldots,F_\Tt$ (so that $f_i=F_i + \dots $ terms of lower degree),  are linearly independent over $k$.  Consider
\[
\xymatrixcolsep{1mm}
\xymatrixrowsep{1mm}
\xymatrix{
\varphi:  &R \ar[rr] && k^\Tt \\
&r \ar@{|->}[rr] && ((r \circ f_1)(0),\ldots,(r \circ f_\Tt)(0))
}
\]
Then $\varphi(\call{M}^{s+1}) = 0$ and $\varphi(\call{M}^s) = k^\Tt$. This map leads to the following.  
\begin{pro}
\label{invsyscorr2}
There is a one-to-one correspondence between zero dimensional ideals of $R$ such that $R/I$ is $s$-level of type $\Tt$ and $R$-submodules of $P$ generated by $\Tt$ polynomials of degree $s$ having linearly independent forms of degree $s$. The correspondence is defined as follows:
\begin{eqnarray*}
\left\{  \begin{array}{cc} I \subseteq R \mbox{ such that } R/I \\
\mbox{ is Artinian level of type} \\
\Tt \mbox{ and socle degree } s \end{array} \right\}  \ \stackrel{1 - 1}{\longleftrightarrow} \ 
\left\{ \begin{array}{cc} M \subseteq P \mbox{ submodule generated by} \\
\Tt \mbox{ polynomials of degree} \\
s  \mbox{ with l.i. forms  of degree } s \end{array} \right\} \\
I \ \ \ \ \ \ \ \ \longrightarrow \ \ \ \ \ I^\bot  \ \ \ \ \ \ \ \ \qquad \ \ \ \  \ \ \ \qquad \ \ \ \  \ \ \ \\
Ann_R(M) \ \ \ \ \  \longleftarrow \ \ \ \ \  M \ \ \ \ \ \ \ \ \ \ \ \ \ \ \ \  \qquad \qquad \ \ \ \  \ \ \
\end{eqnarray*}
\end{pro}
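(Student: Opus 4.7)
The plan is to check that, under the general Macaulay duality $I \leftrightarrow I^\bot$, $M \leftrightarrow Ann_R(M)$ recalled just above the statement, the two extra conditions — $R/I$ being $s$-level of type $\Tt$ on the one side, and $M$ being generated by $\Tt$ polynomials of degree $s$ with linearly independent leading forms on the other — match up. Once this matching is in place, bijectivity is automatic from the underlying inverse-system correspondence.

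For $(\Leftarrow)$, given $M = \langle f_1,\ldots,f_\Tt\rangle_R$ with the $f_i$ of degree $s$ and linearly independent leading forms $F_1,\ldots,F_\Tt$, I would use the map $\varphi \colon R \to k^\Tt$ defined just before the statement. First, I would verify that $Ann_R(M) = \{h \in R : \varphi(Rh) = 0\}$: $g \in Ann_R(M)$ amounts to $g \circ f_i = 0$ in $P$ for each $i$, and a polynomial vanishes iff every partial derivative evaluated at the origin vanishes, i.e.\ iff $((rg)\circ f_i)(0)=0$ for every $r\in R$ and every $i$, which is precisely $\varphi(Rg)=0$. The hypotheses $\deg f_i = s$ and linear independence of the $F_i$ translate via the perfect pairing \eqref{dual} between $P_s$ and $\call{M}^s/\call{M}^{s+1}$ (non-degenerate in characteristic zero) into $\varphi(\call{M}^{s+1})=0$ and $\varphi(\call{M}^s)=k^\Tt$. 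Lemma \ref{lemmainv1} then yields that $R/Ann_R(M)$ is $s$-level of type $\Tt$.

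For $(\Rightarrow)$, let $A=R/I$ be $s$-level of type $\Tt$. Since $\call{M}^{s+1}\subseteq I$, we have $I^\bot \subseteq P_{\ls s}$, and I would introduce the leading-form map $\lambda \colon I^\bot \to P_s$, $\lambda(f) = f_s$. For $r \in \call{M}$ and $f \in I^\bot$, $r\circ f$ has strictly smaller degree, so $\lambda(\call{M} I^\bot)=0$ and $\lambda$ factors through $I^\bot/\call{M} I^\bot$. The image $L := \lambda(I^\bot)$ is exactly the subspace of $P_s$ annihilating $\call{M}^s \cap I$ under \eqref{dual}, hence has dimension $\dim_k \call{M}^s/(\call{M}^s\cap I) = \dim_k {\rm Soc}(A) = \Tt$. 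On the other hand, \eqref{dual} identifies $I^\bot$ with the $k$-dual $A^* = {\rm Hom}_k(A,k)$ as $R$-modules, and Matlis duality gives $\dim_k I^\bot/\call{M} I^\bot = \dim_k {\rm Soc}(A) = \Tt$. So the induced map $\bar\lambda \colon I^\bot/\call{M} I^\bot \to L$ is a surjection between $\Tt$-dimensional spaces, hence an isomorphism. Choosing a basis $F_1,\ldots,F_\Tt$ of $L$ and lifting to $f_1,\ldots,f_\Tt\in I^\bot$ with leading forms $F_i$ produces elements of degree exactly $s$ whose classes form a basis of $I^\bot/\call{M} I^\bot$; by Nakayama they generate $I^\bot$ as an $R$-module.

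The most delicate point will be the equality $\ker \lambda = \call{M} I^\bot$ — equivalently, the dimension equality $\dim_k I^\bot/\call{M} I^\bot = \Tt$. The inclusion $\call{M} I^\bot \subseteq \ker \lambda$ is immediate, but the reverse is exactly where the \emph{precise} socle-type hypothesis enters, via the Matlis isomorphism $A^*/\call{M} A^* \cong {\rm Soc}(A)^*$. Everything else is essentially a translation through the bilinear pairing \eqref{dual}.
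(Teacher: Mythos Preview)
The paper does not actually prove this proposition: immediately after the statement it reads ``We omit the proof because this fact is well known (see \cite{Emsalem} and \cite{Iarrobino}).'' So there is no argument in the paper to compare against, beyond the paragraph preceding the proposition, which sets up the map $\varphi$ and asserts $\varphi(\call{M}^{s+1})=0$, $\varphi(\call{M}^s)=k^\Tt$ --- precisely your $(\Leftarrow)$ direction via Lemma~\ref{lemmainv1}. Your identification $Ann_R(M)=\{h:\varphi(Rh)=0\}$ is correct and is the only thing the paper leaves implicit there.

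Your $(\Rightarrow)$ direction is a clean and correct argument that goes beyond what the paper records. The two key dimension counts are both right: $\dim_k L=\Tt$ follows either from the orthogonality computation you sketch or, more directly, from the Hilbert-function identity \eqref{HF}, since $L=(I^\bot)_s$ and $HF_{R/I}(s)=\dim_k\got{m}^s=\Tt$; and $\dim_k I^\bot/\call{M} I^\bot=\Tt$ is exactly Matlis duality applied to $I^\bot\simeq A^*$, giving minimal generators of $A^*$ dual to ${\rm Soc}(A)$. The passage from $\bar\lambda$ being an isomorphism to the existence of $\Tt$ generators with independent leading forms via Nakayama is the natural finishing step. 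Nothing is missing; the ``delicate point'' you flag is genuinely the heart of the matter, and your Matlis-duality justification handles it.
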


We omit the proof because this fact is well known (see \cite{Emsalem} and \cite{Iarrobino}). 

\vskip 3mm 
It is also possible to use Macaulay's inverse system in order to get information on Hilbert functions. Let $M$ be a $R$-submodule of $P$, we define  the following finitely generated $k$-vector spaces
\begin{equation*}  
M_i := \frac{M \cap P_{\ls i} + P_{<i}}{P_{<i}} \ \ \ \ \ \mbox{for all } i \gs 0.
\end{equation*}
The correspondence  between zero dimensional ideals $I$ of $R$ and finitely generated $R$-submodules $M$ of $P  $ preserves lengths and Hilbert functions,  that is
\begin{equation}
\label{HF}
HF_{R/I}(i) = \dim_k ((I^\bot)_i) \ \ \ \mbox{for all } i \gs 0.
\end{equation}

\medskip
\subsection{Q-decomposition of the associated graded ring}

Let $A = R/I$ be an Artinian $s$-level local $k$-algebra and let $G=\oplus_i \got{m}^i/\got{m}^{i+1}$ be its associated graded ring. It is well known that the associated graded ring $G$ of $A$ can be presented as the quotient of the polynomial ring $P$ by a homogeneous ideal $I^*$, called initial ideal of $I$. More explicitly $I^*$ is the ideal generated by all the initial forms of elements in $I$. See \cite{RossiValla} for more references, or \cite{Eis} for a more geometrical approach involving the tangent cone.

Notice that, even if we are assuming that $A$ is level, $G$ may no longer be level. In order to study this pathology, starting from the work of A. Iarrobino for Gorenstein algebras (see  \cite{Iarrobino}), let us consider the $\got{m}$-adic filtration $\{ \got{m}^i\}_{i \gs 0} $ and the L\"oewy's filtration $\{ 0:\got{m}^i\}_{i \gs 0}$ on $A$ and define, for each $a \in \{0,\ldots,s+1\}$ and for every $i \gs 0$, the following ideals of $G$   
\begin{equation*}
C(a) = \bigoplus_{i \gs 0} C(a)_i
\end{equation*}
whose homogeneous components can be described as follows:  \\
\begin{equation*}
C(a)_i = \frac{(0:\got{m}^{s+1-a-i}) \cap \got{m}^i}{(0:\got{m}^{s+1-a-i})\cap \got{m}^{i+1}} \subseteq G_i.
\end{equation*}
An immediate consequence of the definition is
\begin{equation*}
G = C(0) \supseteq C(1) \supseteq \ldots \supseteq C(s) = 0.
\end{equation*}
Moreover it is easy to prove that if $a \gs 1$ then $C(a)_i = 0$ for each $i \gs s-a$, while $C(0)_i = 0$ for each $i > s$. 
Define $Q(a):=C(a)/C(a+1)$. Then 
\begin{equation*}
\{Q(a) : a=0,\ldots,s-1\}
\end{equation*}
is called {\it Q-decomposition} of the associated graded ring $G$.
\begin{pro}
\label{q0level}
Let $(A,\got{m},k)$ be an Artinian $s$-level local algebra of type $\Tt$. Then $Q(0) = G/C(1)$ is the unique  Artinian graded $s$-level quotient of $G$ of type $\Tt$ up to isomorphism.
\end{pro}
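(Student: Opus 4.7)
My plan is to split the proposition into the existence claim (that $Q(0)=G/C(1)$ is graded $s$-level of type $\Tt$) and the uniqueness claim (every graded $s$-level quotient of $G$ of type $\Tt$ is isomorphic to $Q(0)$). A preliminary observation is that $C(1)=\bigoplus_i C(1)_i$ is a homogeneous ideal of $G$ (direct check using $\got{m}^j\cdot(0:\got{m}^{s-i})\subseteq (0:\got{m}^{s-i-j})$ for $x\in\got{m}^j$ multiplying into $C(1)_i$), so $G/C(1)$ is automatically a graded quotient ring.

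For existence, I first simplify the description of $C(1)$. Since $\got{m}^{j+1}\cdot\got{m}^{s-j}=\got{m}^{s+1}=0$, one has $\got{m}^{j+1}\subseteq 0:\got{m}^{s-j}$, so the denominator in the definition of $C(1)_j$ collapses to $\got{m}^{j+1}$, identifying $C(1)_j$ with $((0:\got{m}^{s-j})\cap\got{m}^j)/\got{m}^{j+1}\subseteq G_j$ for $0\ls j\ls s$. In particular $C(1)_s=0$ because $0:\got{m}^0=0$, so $(G/C(1))_s=G_s$, whose dimension equals $HF_A(s)=\dim_k{\rm Soc}(A)=\Tt$ by the level hypothesis. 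The decisive step is to show that the socle of $G/C(1)$ sits entirely in degree $s$: take $\bar x\in(G/C(1))_i$ with $i<s$ annihilated by the maximal ideal and lift to $x\in\got{m}^i$. For every $y\in\got{m}$ the product $xy$ represents a class in $C(1)_{i+1}=((0:\got{m}^{s-i-1})\cap\got{m}^{i+1})/\got{m}^{i+2}$, so $xy\in(0:\got{m}^{s-i-1})\cap\got{m}^{i+1}$, and in particular $xy\got{m}^{s-i-1}=0$. Letting $y$ range over $\got{m}$ and using $\got{m}\cdot\got{m}^{s-i-1}=\got{m}^{s-i}$, we conclude $x\got{m}^{s-i}=0$, i.e. $x\in(0:\got{m}^{s-i})\cap\got{m}^i$, which is precisely the preimage of $C(1)_i$. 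Hence $\bar x=0$, and $Q(0)$ is graded $s$-level of type $\Tt$.

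For uniqueness I apply Macaulay's correspondence (Proposition \ref{invsyscorr2}) to the graded ring $G=R/I^*$. Any graded $s$-level quotient $G/J$ of type $\Tt$ corresponds to a homogeneous $R$-submodule $N=J^\bot\subseteq(I^*)^\bot$ generated by $\Tt$ polynomials of degree $s$ with linearly independent leading forms, which here are the generators themselves. But $\dim_k((I^*)^\bot)_s=HF_G(s)=HF_A(s)=\Tt$ by \eqref{HF}, forcing the $\Tt$ generators to form a basis of the entire degree-$s$ component of $(I^*)^\bot$. Thus $N=R\cdot((I^*)^\bot)_s$ is intrinsically determined by $G$, and so is $J=Ann_R(N)$; uniqueness follows. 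The main obstacle is the annihilator chase that pins down the socle of $G/C(1)$ in the second paragraph, since everything afterwards is a clean dimension count on the dual side.
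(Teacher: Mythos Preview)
Your proof is correct and follows essentially the same line as the paper's: the existence part is the same annihilator chase, using $\got{m}^{i+1}\subseteq 0:\got{m}^{s-i}$ to simplify $C(1)_i$ and then the identity $((0:\got{m}^{s-i-1}):\got{m})=0:\got{m}^{s-i}$ to force socle elements of $Q(0)$ in degree $<s$ to vanish. For uniqueness the paper simply invokes the classical fact (citing \cite{FRLK}) that a graded $s$-level algebra is determined by its top component $Q(0)_s=G_s$, whereas you spell this out via Macaulay's correspondence and a dimension count on $((I^*)^\bot)_s$ --- a self-contained rephrasing of the same idea rather than a different argument.
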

\begin{proof}[\scshape Proof]
First we prove that $Q(0)$ is $s$-level. Being $\got{m}^{s+1}=0, $ we have $\got{m}^{i+1} \subseteq (0:\got{m}^{s-i})$.
\begin{eqnarray*}
Q(0) = \frac{\bigoplus_{i \gs 0} \got{m}^i / \got{m}^{i+1}}{\bigoplus_{i \gs 0} (0:\got{m}^{s-i})\cap \got{m}^i /(0:\got{m}^{s-i}) \cap \got{m}^{i+1}} = \ \ \ \ \ \ \ \    \\
= \frac{\bigoplus_{i \gs 0} \got{m}^i / \got{m}^{i+1}}{\bigoplus_{i \gs 0} (0:\got{m}^{s-i})\cap \got{m}^i / \got{m}^{i+1}} \simeq \bigoplus_{i \gs 0} \frac{ \got{m}^i}{(0:\got{m}^{s-i})\cap \got{m}^i}.
\end{eqnarray*}
Also $(0:Q(0)_1) = Q(0)_s = G_s$, in fact clearly $Q(0)_s \subseteq (0:Q(0)_1)$. Conversely let $\overline a \in Q(0)_i $ be such that $\overline a \in {\rm Soc}(Q(0)) $ with $i < s.$ Then  $a \in ((0:\got{m}^{s-i-1}):\got{m}) = (0:\got{m}^{s-i})$, that is $\overline{a} = 0$, and so $Q(0)$ is $s$-level. Then $Q(0) $ is the unique  Artinian graded $s$-level quotient of $G$ of type $\Tt$  from  classical facts (see \cite{FRLK}). Indeed, an $s$-level  standard graded algebra only depends on its homogeneous component of degree $s$, and in this case $Q(0)_s = G_s$ does not depend on the decomposition.
\end{proof}

Let us set some notation. Given a polynomial $f \in P=k[x_1,\ldots,x_m]$ of degree $s$, we will denote by $F$ its leading form, that is we can write $f$ as
\begin{equation*}
f= F + \ldots \mbox{ terms of lower degree }  
\end{equation*}
Also, given a set of polynomials $\underline{f}=\{f_1,\ldots,f_\Tt\}$, we will denote by $\underline{F}$ the set $\{F_1,\ldots,F_\Tt\}$ of the corresponding leading forms. According to Section 2 it is defined 
$$
Ann_R(\underline{f}) := Ann_R(\langle f_1,\ldots,f_\Tt\rangle_R) \ \ \mbox{ and } \ \ Ann_R(\underline{F}) := Ann_R(\langle F_1,\ldots,F_\Tt\rangle_R).
$$
We also introduce the following short notation for the quotients:
\begin{equation*}
A_{\underline{f}} := R/Ann_R(\underline{f}) \ \ \ \mbox{ and } \ \ \ A_{\underline{F}} := R/Ann_R(\underline{F}).
\end{equation*}
 \vskip 3mm

\begin{pro}
\label{q0omog}
Let $A_{\underline{f}}$ be an Artinian $s$-level local $k$-algebra of type $\Tt$. Then
\begin{equation*}
Q(0) \simeq A_{\underline{F}}.
\end{equation*}
\end{pro}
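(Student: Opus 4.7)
The plan is to combine Proposition \ref{q0level}, which says that $Q(0)$ is the \emph{unique} graded $s$-level quotient of $G$ of type $\Tt$, with Proposition \ref{invsyscorr2}, which tells us how to recognize level algebras from the inverse system. Concretely, I want to show that $A_{\underline{F}}$ is a graded $s$-level quotient of $G$ of type $\Tt$; uniqueness then forces $Q(0) \simeq A_{\underline{F}}$.

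First I would verify that $A_{\underline{F}}$ is naturally a standard graded algebra of socle degree $s$ and type $\Tt$. Since each $F_i$ is homogeneous of degree $s$, the submodule $\langle F_1,\ldots,F_\Tt\rangle_R \subseteq P$ is graded (taking partial derivatives of a homogeneous polynomial produces homogeneous polynomials of lower degree), and consequently $\mathrm{Ann}_R(\underline{F})$ is a homogeneous ideal of $R$. The quotient $A_{\underline{F}}$ therefore inherits a standard grading. The hypothesis that the $F_i$ are linearly independent is exactly the condition of Proposition \ref{invsyscorr2}, so $A_{\underline{F}}$ is $s$-level of type $\Tt$.

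Next, and this is the technical heart of the argument, I would prove the inclusion $I^* \subseteq \mathrm{Ann}_R(\underline{F})$, where $I = \mathrm{Ann}_R(\underline{f})$ and $I^*$ is its initial ideal. Take $g \in I$ and write its $\call{M}$-adic decomposition $g = g_r + g_{r+1} + \cdots$, so that the leading form is $g^* = g_r$. Similarly decompose $f_i = F_i + F_i^{(s-1)} + \cdots + F_i^{(0)}$ into homogeneous pieces. Because $g_j \circ F_i^{(k)}$ is homogeneous of degree $k-j$ (or zero), the homogeneous component of maximal degree $s - r$ in $g \circ f_i$ is exactly $g_r \circ F_i$. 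Since $g \in I$ implies $g \circ f_i = 0$ by the Remark after Proposition \ref{invsyscorr2}, the top-degree term vanishes, giving $g^* \circ F_i = 0$ for every $i$. Hence $g^* \in \mathrm{Ann}_R(\underline{F})$, and as $I^*$ is generated by such initial forms we obtain $I^* \subseteq \mathrm{Ann}_R(\underline{F})$.

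This yields a surjection $G = R/I^* \twoheadrightarrow R/\mathrm{Ann}_R(\underline{F}) = A_{\underline{F}}$ of graded $k$-algebras. Thus $A_{\underline{F}}$ is a standard graded $s$-level quotient of $G$ of type $\Tt$, and the uniqueness statement of Proposition \ref{q0level} gives $Q(0) \simeq A_{\underline{F}}$. The step I expect to require the most care is the precise bookkeeping of degrees showing that the top homogeneous component of $g \circ f_i$ is $g^* \circ F_i$; everything else is a formal application of the Macaulay correspondence and the characterization of $Q(0)$.
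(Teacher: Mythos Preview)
Your argument is correct and follows the same overall architecture as the paper: establish that $A_{\underline{F}}$ is graded $s$-level of type $\Tt$, prove the inclusion $I^* \subseteq \mathrm{Ann}(\underline{F})$, then invoke the uniqueness in Proposition~\ref{q0level}. The difference lies in how the inclusion is proved. The paper first handles only the degree-$s$ component, using the pairing $\langle h, f_i\rangle = (h\circ f_i)(0)$ to see that $I^*_s \subseteq [\mathrm{Ann}_P(\underline{F})]_s$, and then bootstraps to all degrees via the Fr\"oberg--Laksov duality $[\mathrm{Ann}_P(\underline{F})]_t = \{D\in P_t : DP_{s-t}\subseteq [\mathrm{Ann}_P(\underline{F})]_s\}$. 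You instead use the stronger characterization $\mathrm{Ann}_R(M)=\{g : g\circ M = 0\}$ from the Remark and read off $g^*\circ F_i = 0$ directly as the top homogeneous component of $g\circ f_i = 0$. Your route is more elementary, since it avoids the external duality lemma; the paper's route makes the role of the level structure of $\mathrm{Ann}_P(\underline{F})$ more visible. One small point of notation: the paper presents $G$ as $P/I^*$, so the surjection is really $P/I^*\twoheadrightarrow P/\mathrm{Ann}_P(\underline{F})\simeq A_{\underline{F}}$; this is purely cosmetic since $\underline{F}$ is homogeneous.
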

\begin{proof}[\scshape Proof]
Notice that, using the inclusion $P \subseteq R$ and restricting the homomorphism $\circ$ defined in Section 2, one can view $P$ as a module over itself (this structure of $P$-module is different from the natural one given by the ring structure). Also, given the set of homogeneous polynomials $\underline{F} = \{F_1,\ldots,F_\Tt\}$ and using this restriction, one can define the set $Ann_P(\langle \underline{F} \rangle_P) \subseteq P$. It turns out that $Ann_P(\langle \underline{F} \rangle_P)$ is a homogenous ideal of $P$, since $F_1,\ldots,F_\Tt$ are all homogeneous, and furthermore the quotient $P/ Ann_P(\langle \underline{F} \rangle_P)$ is an Artinian graded $s$-level $k$-algebra of type $\Tt$. Also, using once again the fact that $F_1,\ldots,F_\Tt$ are homogeneous, one has
\[
A_{\underline{F}} = \frac{R}{Ann_R(\langle \underline{F} \rangle_R)} \simeq \frac{P}{Ann_P(\langle \underline{F} \rangle_P)}.
\]
\begin{claim*}
Let $I := Ann_R(\langle \underline{f}\rangle_R)$. Then $I^* \subseteq Ann_P(\langle \underline{F}\rangle_P)$.
\end{claim*} 
The proposition follows from the claim, in fact assuming $I^* \subseteq Ann_P(\langle \underline{F}\rangle_P)$ we have
\begin{equation*}
\frac{P}{Ann_P(\langle \underline{F}\rangle_P)}  \simeq  \frac{P/I^*}{Ann_P(\langle \underline{F}\rangle_P)/I^*}  \simeq \frac{G}{Ann_P(\langle \underline{F}\rangle_P)/I^*}.
\end{equation*}
But $Q(0)$ is the only graded $s$-level quotient of $G$ of type $\tau$ up to isomorphism by Proposition \ref{q0level}, and $P/Ann_P(\langle \underline{F}\rangle_P)$ is also graded $s$-level of type $\Tt$. Therefore we must have
\begin{equation*}
A_{\underline{F}} \simeq P/Ann_P(\langle \underline{F}\rangle_P) \simeq Q(0).
\end{equation*}
\begin{proof}[\scshape Proof of the Claim]
We prove first that $I^*_s \subseteq \left[Ann_P(\langle \underline{F}\rangle_P)\right]_s$, where the subscript means the homogeneous component of the two ideals. Let $H \in I^*_s$, then there exists $h \in R$ such that $h^* = H$, that is $h = H + \dots$ higher order terms, and also $h \in I = Ann_R(\langle \underline{f}\rangle_R)$. For $i \in \{1,\ldots,\Tt\}$ we get
\[
\xymatrixrowsep{0.5mm}
\xymatrixcolsep{5mm}
\xymatrix{
0 = (h \circ f_i)(0) = \left[(H + \mbox{ terms of order} > s) \circ (F_i + \mbox{ terms of degree} < s)\right](0) = (H \circ F_i)(0),  
}
\]
and this equality holds for each $i = 1,\ldots,\tau$. Therefore 
\begin{equation*}
H \in \left[Ann_P(\langle \underline{F}\rangle_P)\right]_s
\end{equation*}
and  we get $I^*_s \subseteq \left[Ann_P(\langle \underline{F}\rangle_P)\right]_s$. By a well known duality result for graded level algebras (see for example \cite{FRLK}), for all $t \in \g{N}$ we have
\[
\left[Ann_P(\langle \underline{F}\rangle_P)\right]_t  = \left\{ D \in P_t : D P_{s-t} \subseteq \left[Ann_P(\langle\underline{F}\rangle_P)\right]_s\right\}.
\end{equation*}
Let $D \in I^*_t$ and let $E \in P_{s-t}$, then $DE \in I^*_s \subseteq \left[Ann_P(\langle\underline{F}\rangle_P)\right]_s$. This holds for all $E \in P_{s-t}$, therefore $D \in \left[Ann_P(\langle \underline{F}\rangle_P)\right]_t$. Since $D\in I^*_t$ and $t \in \g{N}$ are arbitrary, this proves the containment and hence the claim.
\renewcommand{\qedsymbol}{}
\end{proof}
\end{proof}

\begin{pro}
\label{equivfactlev} 
Let $A_{\underline{f}}$ be an Artinian $s$-level local $k$-algebra of type $\Tt$. The following facts are equivalent:
\begin{compactenum}[\rm (i)]
\item $G$ is $s$-level of type $\Tt$.
\item $G \simeq Q(0)$.
\item $C(1)=0$.
\item $C(a) = 0$ for all $a \gs 1$.
\item $Q(a) = 0$ for all $a \gs 1$.
\end{compactenum}
\end{pro}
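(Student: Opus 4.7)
\medskip

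My plan is to prove the proposition by setting up the cyclic chain of implications $(i) \Rightarrow (ii) \Rightarrow (iii) \Rightarrow (iv) \Rightarrow (v) \Rightarrow (iii)$, and then tie $(i)$ to the rest through Proposition \ref{q0level}. The key tool available is the inclusion chain $G = C(0) \supseteq C(1) \supseteq \ldots \supseteq C(s) = 0$ displayed before the statement, together with the definition $Q(a) = C(a)/C(a+1)$ and the characterization in Proposition \ref{q0level} of $Q(0)$ as the unique $s$-level graded quotient of $G$ of type $\Tt$.

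For $(i) \Leftrightarrow (ii)$: Proposition \ref{q0level} says that $Q(0)$ is the unique Artinian graded $s$-level quotient of $G$ of type $\Tt$ up to isomorphism. Thus if $G$ itself is $s$-level of type $\Tt$, it must be isomorphic to $Q(0)$, giving $(i) \Rightarrow (ii)$. Conversely, since $Q(0)$ is always $s$-level of type $\Tt$ by that same proposition, an isomorphism $G \simeq Q(0)$ immediately transfers this property to $G$, giving $(ii) \Rightarrow (i)$.

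For $(ii) \Leftrightarrow (iii)$: By definition $Q(0) = G/C(1)$. Since $G$ is finite-dimensional as a $k$-vector space, $G \simeq G/C(1)$ (as $k$-algebras, or even just as $k$-vector spaces) forces $C(1) = 0$; conversely, $C(1)=0$ immediately gives $Q(0) = G$. For $(iii) \Leftrightarrow (iv)$: from the descending chain $C(1) \supseteq C(2) \supseteq \ldots$, vanishing of $C(1)$ forces vanishing of every $C(a)$ with $a \gs 1$, and the reverse direction is trivial. For $(iv) \Rightarrow (v)$: if $C(a) = 0$ for $a \gs 1$, then certainly $Q(a) = C(a)/C(a+1) = 0$. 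For $(v) \Rightarrow (iv)$: if $Q(a) = 0$ for all $a \gs 1$, then $C(a) = C(a+1)$ for all $a \gs 1$, and a descending induction starting from the already known equality $C(s) = 0$ shows $C(s-1) = C(s) = 0$, $C(s-2) = 0$, and so on down to $C(1) = 0$, so $(v) \Rightarrow (iii)$ directly.

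The content is thus essentially bookkeeping with the filtration $\{C(a)\}$ once Proposition \ref{q0level} is in hand; I do not foresee any genuine obstacle. The only place where one must be slightly careful is in $(ii) \Rightarrow (iii)$, where one uses that $G$ has finite length to turn the surjection $G \twoheadrightarrow Q(0)$ with $k$-vector space isomorphism into vanishing of its kernel $C(1)$.
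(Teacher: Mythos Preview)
Your proof is correct and follows essentially the same route as the paper: both use Proposition~\ref{q0level} to link (i) and (ii), then exploit the descending chain $C(1)\supseteq\cdots\supseteq C(s)=0$ and the definition $Q(a)=C(a)/C(a+1)$ for the remaining equivalences. The only cosmetic difference is that the paper closes the cycle via $(v)\Rightarrow(i)$ (again invoking Proposition~\ref{q0level}) while you go $(v)\Rightarrow(iii)$ by the same descending induction; your explicit mention of the finite-length argument for $(ii)\Rightarrow(iii)$ is a nice clarification of what the paper calls ``clearly equivalent''.
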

\begin{proof}[\scshape Proof]
By Proposition \ref{q0level} $Q(0)=G/C(1)$ is the only graded $s$-level quotient of $G$ of type $\Tt$. Then (i) implies (ii), which is clearly equivalent to (iii). Moreover, being $C(1) \supseteq C(2) \supseteq \ldots C(s) = 0$, (iii) implies (iv). Because of the definition of the $Q(a)$'s, if $C(a) = 0$ for all $a \gs 1$, then (v) holds. Finally, (v) implies (i) by Proposition \ref{q0level}.
\end{proof}
Let us focus now on level algebras of socle degree $s=3$. We have $C(a) = 0$ for all $a \gs 2$, therefore:
\begin{equation*}
G = C(0) \supseteq C(1) \supseteq C(2) = 0.
\end{equation*}
Moreover
\begin{equation}
\label{glevel}
C(1)_1 = \frac{0: \got{m}^2}{(0: \got{m}^2)\cap \got{m}^2} \ \mbox{ while } \ C(1)_i = 0 \ \mbox{ for each } i\ne 1.
\end{equation}
\begin{coroll}
Let $A_{\underline{f}}$ be an Artinian 3-level local algebra.. Then its associated graded algebra $G$ is 3-level of type $\Tt$ if and only if $\got{m}^2 = 0: \got{m}^2$.
\end{coroll}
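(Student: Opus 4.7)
The plan is to chain together Proposition \ref{equivfactlev} with the explicit description of $C(1)$ in socle degree $3$ given in (\ref{glevel}), and then observe that one of the two inclusions defining the equality $\got{m}^2 = 0:\got{m}^2$ is automatic in a level algebra of socle degree $3$.

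First, I would invoke Proposition \ref{equivfactlev}, which tells us that $G$ is $3$-level of type $\Tt$ if and only if $C(1)=0$. Since $A$ has socle degree $3$ we already know from the discussion preceding the corollary that $C(a)=0$ for $a\gs 2$, and by (\ref{glevel}) the only possibly nonzero homogeneous component of $C(1)$ is
\[
C(1)_1 \;=\; \frac{0:\got{m}^2}{(0:\got{m}^2)\cap \got{m}^2}.
\]
Thus the condition $C(1)=0$ is equivalent to $C(1)_1=0$, which in turn is equivalent to $0:\got{m}^2 \subseteq \got{m}^2$.

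Next, I would observe that the reverse inclusion $\got{m}^2 \subseteq 0:\got{m}^2$ is automatic in our setting: since $A$ is $3$-level we have $\got{m}^4 = \got{m}^2\cdot\got{m}^2 \subseteq \got{m}^{s+1}=0$, so every element of $\got{m}^2$ annihilates $\got{m}^2$. Combining the two inclusions yields $\got{m}^2 = 0:\got{m}^2$ precisely when $C(1)_1=0$, which by the previous step is equivalent to $G$ being $3$-level of type $\Tt$.

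There is no real obstacle here beyond unpacking the definitions correctly; the only small subtlety is checking that $0:\got{m}^2$ already sits inside $\got{m}$ (so the numerator of $C(1)_1$ indeed equals $(0:\got{m}^2)\cap\got{m}$ as written in (\ref{glevel})), but this follows immediately because a unit cannot annihilate $\got{m}^2\ne 0$. With that in hand the corollary is a two-line consequence of Proposition \ref{equivfactlev} and formula (\ref{glevel}).
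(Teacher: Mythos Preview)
Your argument is correct and is exactly the route the paper intends: the corollary is stated without proof because it follows immediately from Proposition \ref{equivfactlev} together with the explicit description of $C(1)$ in (\ref{glevel}), precisely as you unpack it. Your additional remarks (that $\got{m}^2\subseteq 0:\got{m}^2$ is automatic since $\got{m}^4=0$, and that $0:\got{m}^2\subseteq\got{m}$ because $\got{m}^2\ne 0$) are the small checks needed to make the implicit argument rigorous.
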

\begin{coroll}
\label{corc1}
Let $A_{\underline{f}}$ be an Artinian 3-level local $k$-algebra. Then, with the above notation
\begin{eqnarray*}
HF_{A_{\underline{f}}}(j) = \left\{
\begin{array}{ll}
HF_{Q(0)}(j) + HF_{C(1)}(j) \ \ \mbox{ if } j =1 \\
HF_{Q(0)}(j) \ \ \ \ \ \ \qquad \qquad \mbox{ if } j=0,2,3
\end{array} \right.
\end{eqnarray*}
\end{coroll}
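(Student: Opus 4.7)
The plan is to exploit the short exact sequence of graded $G$-modules coming directly from the definition $Q(0) = G/C(1)$, namely
\[
0 \longrightarrow C(1) \longrightarrow G \longrightarrow Q(0) \longrightarrow 0,
\]
which, since Hilbert functions are additive on short exact sequences of finitely generated graded modules, yields $HF_G(j) = HF_{Q(0)}(j) + HF_{C(1)}(j)$ for every $j \gs 0$.

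Next I would observe that the Hilbert function of $A_{\underline{f}}$ coincides with that of its associated graded ring $G$, by the very definition $HF_{A_{\underline{f}}}(j) = \dim_k \got{m}^j/\got{m}^{j+1} = \dim_k G_j$. Combining this with the additivity above gives $HF_{A_{\underline{f}}}(j) = HF_{Q(0)}(j) + HF_{C(1)}(j)$ in every degree.

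To finish, I would invoke the structure of $C(1)$ in the socle degree $s=3$ case, which was spelled out just before the corollary in formula (\ref{glevel}): we have $C(1)_i = 0$ for all $i \ne 1$ and $C(1)_1 = (0:\got{m}^2)/\bigl((0:\got{m}^2)\cap \got{m}^2\bigr)$. Consequently $HF_{C(1)}(j) = 0$ for $j = 0, 2, 3$, while $HF_{C(1)}(1)$ may be nonzero. Substituting these vanishings into the additive formula gives exactly the two cases stated in the corollary.

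There is no real obstacle here; the content is essentially bookkeeping, with the only nontrivial input being the concentration of $C(1)$ in degree $1$ when $s=3$, which has already been established in (\ref{glevel}) and ultimately follows from the general fact that $C(a)_i = 0$ for $i \gs s - a$ when $a \gs 1$ (applied with $a = 1$, $s = 3$) together with $C(1)_0 \subseteq G_0/$ something that vanishes because the socle of $A$ sits in degree $s$. So the write-up should be short: state the exact sequence, take Hilbert functions, and invoke (\ref{glevel}).
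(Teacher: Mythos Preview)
Your proposal is correct and is exactly the argument the paper has in mind: the corollary is stated without proof, as an immediate consequence of the definition $Q(0)=G/C(1)$ together with formula~(\ref{glevel}), and your write-up via the short exact sequence $0\to C(1)\to G\to Q(0)\to 0$ and additivity of Hilbert functions is precisely the intended unpacking.
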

\medskip

Next proposition shows that the Q-decomposition is also very useful to establish whether a local algebra is graded or not.

\begin{pro}
\label{procangrad}
Let $A_{\underline{f}}$ be an Artinian $s$-level local algebra of type $\Tt$. The following facts are equivalent:
\begin{compactenum}[\rm (i)]
\item $A_{\underline{f}}$ is graded.
\item $A_{\underline{f}} \simeq Q(0)$.
\item $\langle \underline{f}\rangle_R  \simeq  \langle \underline{F}\rangle_R$ as $R$-modules.
\end{compactenum}
\end{pro}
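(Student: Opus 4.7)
The plan is to establish the two equivalences (i) $\Leftrightarrow$ (ii) and (ii) $\Leftrightarrow$ (iii) by combining the level-theoretic results and the Macaulay correspondence already developed in this section.

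For (i) $\Leftrightarrow$ (ii), I would first assume that $A_{\underline f}$ is graded, so that $A_{\underline f} \simeq G$. Then $G$ itself is $s$-level of type $\Tt$, and by Proposition \ref{equivfactlev} we obtain $G \simeq Q(0)$, hence $A_{\underline f} \simeq Q(0)$. For the converse, $Q(0)=G/C(1)$ is the quotient of the graded ring $G$ by a homogeneous ideal (by the description of $C(1)$ in homogeneous components), and is therefore graded; so any isomorphism $A_{\underline f} \simeq Q(0)$ makes $A_{\underline f}$ graded.

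For (ii) $\Leftrightarrow$ (iii), Proposition \ref{q0omog} gives $Q(0) \simeq A_{\underline F}$, so (ii) amounts to the identification $A_{\underline f} \simeq A_{\underline F}$, i.e.\ to the equality of ideals $Ann_R(\underline f)=Ann_R(\underline F)$. In the direction (iii) $\Rightarrow$ (ii), any $R$-linear isomorphism $\phi:\langle \underline f\rangle_R \to \langle \underline F\rangle_R$ preserves annihilators, because the identity $\phi(rm)=r\phi(m)$ for $r \in R$ forces $Ann_R(\underline f)=Ann_R(\underline F)$. In the direction (ii) $\Rightarrow$ (iii), the one-to-one Macaulay correspondence of Proposition \ref{invsyscorr2} sends equal ideals to equal $R$-submodules of $P$, yielding $\langle \underline f\rangle_R = \langle \underline F\rangle_R$ and in particular an $R$-module isomorphism.

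The main subtlety is that (ii) must be read as the identification of $A_{\underline f}$ and $Q(0)$ as quotients of $R$ rather than as an abstract $k$-algebra isomorphism; otherwise the annihilator-preservation step would break down, since different $R$-quotients can be abstractly $k$-isomorphic. This convention is the natural one in the Macaulay framework of Section 2, where every Artinian level local algebra is canonically presented as $R/Ann_R(\underline f)$ and its inverse system $\langle \underline f\rangle_R$ is the corresponding canonical $R$-submodule of $P$.
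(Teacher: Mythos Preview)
Your proof largely parallels the paper's. Both argue (i)$\Rightarrow$(ii) via ``$A_{\underline f}$ graded $\Rightarrow$ $G$ level $\Rightarrow$ $G\simeq Q(0)$'', and both obtain (iii)$\Rightarrow$(ii) from the fact that an $R$-linear isomorphism preserves annihilators. The one structural difference is that the paper runs the cycle (i)$\Rightarrow$(ii)$\Rightarrow$(iii)$\Rightarrow$(i) and closes it with a Hilbert-function argument in the last step: from $A_{\underline f}\simeq Q(0)=G/C(1)$ one reads off $HF_G=HF_{A_{\underline f}}=HF_{G/C(1)}$, hence $C(1)=0$ and $A_{\underline f}\simeq G$. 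Your direct (ii)$\Rightarrow$(i) (``$Q(0)$ is already graded'') is a legitimate shortcut to the same conclusion.

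Your caveat about (ii)$\Rightarrow$(iii) is well placed; the paper is equally terse here, writing only that it ``follows immediately by Proposition~\ref{q0omog}''. However, your proposed fix---reading (ii) as equality of the defining ideals---does not really close the gap, because it shifts the difficulty back to (i)$\Rightarrow$(ii): ``$A_{\underline f}$ graded'' yields only an abstract isomorphism with $Q(0)$, not $Ann_R(\underline f)=Ann_R(\underline F)$. Indeed, under the natural readings the implication (ii)$\Rightarrow$(iii) is genuinely delicate: for $g=x^3+y^3+xy$ in $k\lb x,y\rb$ one has $A_g\simeq A_G$ (via $y\mapsto y+\tfrac{1}{6}x^2$), so (i) and (ii) hold, yet $xy\in Ann_R(G)\smallsetminus Ann_R(g)$, whence $\langle g\rangle_R\not\simeq\langle G\rangle_R$ as $R$-modules. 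Neither your argument nor the paper's one-line justification addresses this; fortunately the proposition is invoked later only in the direction (iii)$\Rightarrow$(i), and there with literal equality $\langle\underline f\rangle_R=\langle\underline F\rangle_R$, so the subtlety does not propagate.
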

\begin{proof}[\scshape Proof]
(i) $\Rightarrow$ (ii) If $A_{\underline{f}}$ is  graded then $G$ is level of type $\tau$. Up to isomorphism $Q(0)$ is the only level quotient of $G$ by Proposition \ref{q0level}, then $A_{\underline{f}} \simeq G \simeq Q(0)$. \\
(ii) $\Rightarrow$ (iii) It follows immediately by Proposition \ref{q0omog}. \\
(iii) $\Rightarrow$ (i) Being $\langle \underline{f}\rangle_R \simeq \langle \underline{F}\rangle_R$ one has $A_{\underline{f}} \simeq Q(0) = G/C(1)$, therefore $HF_{A_{\underline{f}}} = HF_{G/C(1)}$.\\
Moreover there exists an epimorphism $G \stackrel{\pi}{\longrightarrow} G/C(1)$ given by the natural projection on the quotient. By definition $HF_{A_{\underline{f}}} =HF_G$ and so $HF_G = HF_{G/C(1)}$, therefore $C(1) = 0$ and $A_{\underline{f}} \simeq G$ is  graded.
\end{proof}

\subsection{Isomorphism classes of Artinian level algebras}

In this section we present a possible approach for studying the classical problem of the isomorphism classes of local algebras using the tool of Macaulay's inverse system. We essentially follow the approach used by J. Elias and M.E. Rossi in \cite{RossiElias} for Gorenstein algebras and we extend it to level algebras.
\vskip 3mm

Given an Artinian  local $k$-algebra $A$ denote by $Aut_a (A)$ and $Aut_v(A)$ the group of the automorphisms of $A$ as a $k$-algebra and as a $k$-vector space respectively. The automorphisms of the power series ring $(R, \call{M},k)$ as  a $k$-algebra are well known (see for example \cite{Singular}, Theorem 6.2.18). If $\call{M}= (x_1,\ldots, x_m) $ they act as a replacement $x_i \mapsto z_i $, $i=1,\cdots, n$, such that $\call{M}= (x_1,\ldots, x_m)= (z_1, \dots, z_m)$. We are interested in Artinian local algebras $A=R/I$ with socle degree $s$ and, since $\call{M}^{s+1}  \subseteq I$, we can restrict to the $k$-algebra automorphisms of $R/ \call{M}^{s+1} $ induced by the projection $\pi:R\longrightarrow R/\call{M}^{s+1}.$ Clearly  $Aut_a(R/ \call{M}^{s+1})  \subseteq Aut_v (R/ \call{M}^{s+1})$.

Let $E=\{ e_i \} $ be the canonical basis of $R/\call{M}^{s+1}$ as a $k$-vector space consisting of the standard monomials $x^{\alpha}$ ordered by the deg-lex order with $x_1>\cdots >x_m$.
Then the dual basis of $E$ with respect to the perfect paring $\langle \ , \ \rangle$ defined in (\ref{dual}) is the basis $E^*=\{ e_i^* \} $ of $P_{\ls s}$,  where
\begin{equation*}
(x^{\alpha})^* = \frac 1 {\alpha !} y^{\alpha},
\end{equation*}
in fact $e_i^* (e_j)= \langle e_j ,  e_i^* \rangle=\delta_{ij}$, where $\delta_{ij}=0$ if $i \ne j$ and $\delta_{ii}=1$. Hence for any $\varphi  \in  Aut_v(R/\call{M }^{s+1}) $ we can associate a matrix $M(\varphi)$ with respect to the basis $E$ of size $l = \dim_k (R/\call{M}^{s+1}) =  \binom{m+s}{s}$. We have  the following natural sequence of morphisms of groups: 
\begin{equation} 
\label{groups} 
Aut_a(R) \stackrel{\pi}{\longrightarrow} Aut_a(R/\call{M}^{s+1} ) \stackrel{\sigma}{\longrightarrow} Aut_v(R/\call{M}^{s+1} ) \stackrel{\rho_E}{\longrightarrow} Gl_l (k). 
\end{equation}

\noindent Given $I$ and $J$ ideals of $R$ such that $\call{M}^{s+1}\subseteq I, J$, there exists a $k$-algebra isomorphism
\begin{equation*}
\varphi : R/I \to R/J
\end{equation*}
if and only if $ \varphi$ is canonically induced by a $k$-algebra automorphism of $R/\call{M}^{s+1}$ sending $I/\call{M}^{s+1}$ to $J/\call{M}^{s+1}$. In particular $\varphi$ is an isomorphism of $k$-vector spaces. Dualizing
\begin{equation*}
\varphi^* : (R/J)^* \to (R/I)^*
\end{equation*}
is an isomorphism of the $k$-vector subspaces $(R/I)^* \simeq I^{\bot}$ and $(R/J)^* \simeq J^{\bot}$ of $P_{\ls s}$. Hence $^t M (\varphi)$ is the matrix associated to $\varphi^*$ with respect to the basis $E^*$ of $P_{\ls s}$. The following commutative diagram helps to visualize our setting:
\begin{eqnarray*}
\begin{array}{ccc}
Aut_{v}(R/\mathcal M^{s+1} ) &  \stackrel{\rho_E}{\longrightarrow} & Gl_l (k) \\
\downarrow *&  & \; \downarrow \; ^t() \\
Aut_{v}(P_{\ls s} )          &  \stackrel{\rho_{E^*}}{\longrightarrow} & Gl_l (k) \\
\end{array}
\end{eqnarray*}

\noindent
Denote by $\call{R}$ the  subgroup of  $Aut_v(P_{\ls s})$ represented by the matrices $^t M (\varphi)$ of  $Gl_l (k)$ with $\varphi\in Aut_a(R/\call{M}^{s+1})$. The following theorem by Emsalem holds.

 \begin{teor}
 \label{Emsalem}{\rm{ (\cite[Proposition 15]{Emsalem})}}
The classification, up to isomorphism, of the Artinian local $k$-algebras of multiplicity $e$, socle degree $s$ and embedding dimension $m$ is equivalent to the classification, up to the action of $\call{R}$, of the $k$-vector subspaces of $P_{\ls s} = \{g \in P: \deg g \ls s\}$ of dimension $e$, stable under derivation and containing $P_{\ls 1}$.
\end{teor}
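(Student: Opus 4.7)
The plan is to prove the theorem in two steps: first, establish a bijection between Artinian local $k$-algebras with given $(e,s,m)$ and subspaces $V\subseteq P_{\ls s}$ of the stated form; second, upgrade this to an identification of isomorphism classes with $\mathcal{R}$-orbits. For the bijection, I would send $A=R/I$ to $V=I^\perp$ and, conversely, $V$ to $R/Ann_R(V)$, invoking Macaulay's correspondence (recalled in the preceding subsection) between zero-dimensional ideals of $R$ and finitely generated $R$-submodules of $P$, which preserves dimensions by (\ref{HF}). The three numerical invariants match the three properties: $\call{M}^{s+1}\subseteq I$ dualizes to $I^\perp\subseteq (\call{M}^{s+1})^\perp = P_{\ls s}$ (since $\call{M}^{s+1}\circ f=0$ iff $\deg f\ls s$); multiplicity $\dim_k(R/I)=e$ equals $\dim_k I^\perp$; and embedding dimension $m$ is equivalent to $I\subseteq\call{M}^2$, which dualizes to $I^\perp\supseteq(\call{M}^2)^\perp=P_{\ls 1}$. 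Stability of $I^\perp$ under derivation is built into its $R$-submodule structure via $\circ$.

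For the isomorphism step, a $k$-algebra isomorphism $R/I\simeq R/J$ amounts to the datum of a $\varphi\in Aut_a(R/\call{M}^{s+1})$ sending $I/\call{M}^{s+1}$ onto $J/\call{M}^{s+1}$: any such isomorphism sends maximal ideal to maximal ideal and extends to an automorphism of $R/\call{M}^{s+1}$ because $\call{M}^{s+1}$ lies in both $I$ and $J$. Taking the adjoint with respect to the perfect pairing $\langle\,,\,\rangle$ of (\ref{dual}), $\varphi^*$ is a $k$-linear automorphism of $P_{\ls s}$ whose matrix in the basis $E^*$ is ${}^tM(\varphi)$; by construction this places $\varphi^*$ in $\mathcal{R}$. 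From the adjunction identity $\langle\varphi(r),f\rangle=\langle r,\varphi^*(f)\rangle$ one extracts the identity $\varphi(I)^\perp=(\varphi^*)^{-1}(I^\perp)$, so $\varphi(I)=J$ holds if and only if $\varphi^*(J^\perp)=I^\perp$. This is precisely the desired correspondence between isomorphism classes of algebras and $\mathcal{R}$-orbits of admissible subspaces.

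The main obstacle I anticipate is this lifting/descent between algebra isomorphisms and elements of $\mathcal{R}$: one must check that \emph{every} $k$-algebra isomorphism $R/I\simeq R/J$ lifts to some $\varphi\in Aut_a(R/\call{M}^{s+1})$ (using that $Aut_a(R)$ consists of substitutions $x_i\mapsto z_i$, which descend mod $\call{M}^{s+1}$), and, conversely, that any element of $\mathcal{R}$ matching $J^\perp$ with $I^\perp$ actually arises from such a lift and therefore induces a genuine algebra isomorphism on the quotients $R/I$ and $R/J$. Once this compatibility is secured, the remainder is a routine unwinding of the commutative diagram (\ref{groups}) together with Macaulay duality.
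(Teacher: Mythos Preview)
Your proposal is correct and follows essentially the same approach as the paper. The paper does not give a formal proof of this theorem (it is cited from Emsalem), but the discussion immediately preceding the statement lays out exactly your argument: the bijection comes from Macaulay's correspondence together with the identifications $\call{M}^{s+1}\subseteq I \leftrightarrow I^\perp\subseteq P_{\ls s}$ and $I\subseteq\call{M}^2 \leftrightarrow P_{\ls 1}\subseteq I^\perp$, and the isomorphism step is handled by lifting $R/I\simeq R/J$ to $\varphi\in Aut_a(R/\call{M}^{s+1})$, dualizing to $\varphi^*$ with matrix $^tM(\varphi)$, and observing that $\varphi^*$ carries $J^\perp$ to $I^\perp$.
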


In the special case of Gorenstein algebras, given $\varphi \in Aut_a(R)$, one has
\begin{equation*}
\varphi(A_f) = A_g \ \mbox{ if and only if } \ (\varphi^*)^{-1}(\langle f \rangle_R) = \langle g\rangle_R.
\end{equation*}
In \cite{RossiElias} J. Elias and M.E. Rossi proved that, since a generator of a cyclic $R$-module is defined modulo an invertible element of $R$ ($f$ and $g$ must be polynomials of the same degree), if $\varphi(A_f) = A_g$ then $(\varphi^*)^{-1}(f) = u \circ g$, where $u$ is a unit in $R$. With level algebras we have
\begin{equation*}
\varphi(A_{\underline{f}}) = A_{\underline{g}} \ \mbox{ if and only if } \ (\varphi^*)^{-1} (\langle f_1,\ldots,f_\Tt \rangle_R) = \langle g_1,\ldots,g_\Tt \rangle_R.
\end{equation*}
If $\varphi(A_{\underline{f}}) = A_{\underline{g}}$, in analogy with the Gorenstein case, we get the following result.
\begin{lemma}
\label{lemmaconv}
Let $\varphi \in Aut_a(R)$. The following facts are equivalent: 
\begin{compactenum}[\rm (i)]
\item $\varphi(A_{\underline{f}}) = A_{\underline{g}}$.
\item There exists $B \in Gl_\Tt(R)$ such that $^t((\varphi^*)^{-1}(f_1),\ldots,(\phi^*)^{-1}(f_\Tt)) = B \circ \ ^t(g_1,\ldots,g_\Tt)$.
\end{compactenum}
\end{lemma}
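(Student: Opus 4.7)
The plan is to use the characterization stated just before the lemma, namely that $\varphi(A_{\underline{f}}) = A_{\underline{g}}$ is equivalent to the equality of $R$-submodules $(\varphi^*)^{-1}(\langle \underline f\rangle_R) = \langle \underline g\rangle_R$ inside $P$. The lemma then reduces to showing that this equality of submodules is the same as the existence of an invertible matrix $B\in Gl_\Tt(R)$ relating the two generating sets through the contraction action $\circ$.

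The implication (ii) $\Rightarrow$ (i) is the short direction. Given such a $B$, each equation $(\varphi^*)^{-1}(f_i) = \sum_j b_{ij}\circ g_j$ exhibits $(\varphi^*)^{-1}(f_i)$ as an element of $\langle \underline g\rangle_R$. Because $\circ$ makes $P$ into a genuine $R$-module, so that $(rs)\circ h = r\circ(s\circ h)$, I can multiply both sides componentwise by $B^{-1}\in Gl_\Tt(R)$ and recover each $g_j$ as an $R$-combination of the $(\varphi^*)^{-1}(f_i)$. The two submodules therefore coincide, which gives (i).

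For (i) $\Rightarrow$ (ii), the equality $(\varphi^*)^{-1}(\langle \underline f\rangle_R) = \langle \underline g\rangle_R$ immediately produces coefficients $b_{ij}\in R$ with $(\varphi^*)^{-1}(f_i) = \sum_j b_{ij}\circ g_j$; set $B=(b_{ij})$. The main obstacle is verifying that $B$ is invertible over $R$. Since $R$ is local, it suffices to prove that the reduction $\bar B\in M_\Tt(k)$ modulo $\call{M}$ lies in $Gl_\Tt(k)$. To this end, I would pass to the Nakayama-type quotients $V_{\underline f}:=\langle \underline f\rangle_R/\call{M}\circ\langle \underline f\rangle_R$ and $V_{\underline g}:=\langle \underline g\rangle_R/\call{M}\circ\langle \underline g\rangle_R$. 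By Proposition \ref{invsyscorr2}, both submodules correspond to level algebras of type $\Tt$, hence each is minimally generated by exactly $\Tt$ elements, and the classes of the $f_i$ (respectively the $g_j$) form $k$-bases of $V_{\underline f}$ (respectively $V_{\underline g}$).

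To bridge the two quotients, a short computation with the pairing $\langle\,,\,\rangle$ of (\ref{dual}) gives the twisted compatibility $(\varphi^*)^{-1}(r\circ h) = \varphi(r)\circ(\varphi^*)^{-1}(h)$. Since $\varphi$ is a $k$-algebra automorphism of the local ring $R$ one automatically has $\varphi(\call{M}) = \call{M}$, so $(\varphi^*)^{-1}$ sends $\call{M}\circ\langle \underline f\rangle_R$ onto $\call{M}\circ\langle \underline g\rangle_R$ and descends to a $k$-linear isomorphism $V_{\underline f}\to V_{\underline g}$. Under this isomorphism the class of $f_i$ is carried to the class of $(\varphi^*)^{-1}(f_i)=\sum_j \bar b_{ij}g_j$, so $\bar B$ is the change-of-basis matrix between two $k$-bases and therefore lies in $Gl_\Tt(k)$. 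Hence $B\in Gl_\Tt(R)$, which completes the argument.
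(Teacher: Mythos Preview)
The paper states Lemma~\ref{lemmaconv} without proof, so there is no argument to compare against. Your proposal supplies a complete and correct proof. The direction (ii)$\Rightarrow$(i) is indeed immediate from the $R$-module structure of $P$. For (i)$\Rightarrow$(ii), your Nakayama argument is the natural one: the twisted compatibility $(\varphi^*)^{-1}(r\circ h)=\varphi(r)\circ(\varphi^*)^{-1}(h)$ follows from the defining relation $\langle g,\varphi^*(h)\rangle=\langle\varphi(g),h\rangle$ together with $\langle g,r\circ h\rangle=\langle gr,h\rangle$, and since $\varphi(\call{M})=\call{M}$ this forces $(\varphi^*)^{-1}$ to descend to an isomorphism of the Nakayama quotients. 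The only point worth making explicit is why the classes of the $f_i$ really form a $k$-basis of $V_{\underline f}$: since the leading forms $F_1,\dots,F_\Tt$ are linearly independent in degree $s$ while $\call{M}\circ\langle\underline f\rangle_R\subseteq P_{\ls s-1}$, no nontrivial $k$-combination of the $f_i$ can land in $\call{M}\circ\langle\underline f\rangle_R$. With that observation in hand, $\bar B$ is a change-of-basis matrix and hence invertible, so $B\in Gl_\Tt(R)$ as you claim.
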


\begin{rmk} In the graded case $(\varphi^*)^{-1}$ is a linear transformation which acts by substitution on the variables of the homogeneous polynomials $F_1,\ldots,F_\Tt$. In the local case this fact is true if $\varphi$ is a linear transformation, but in general it is no longer true.
\end{rmk}

Let $B \in Gl_\Tt(R/\call{M}^{s+1})$, then this matrix acts on $ (P_{\ls s})^\Tt$ as  an isomorphism of $k$-vector spaces. Set $l = \dim_k P_{\ls s}$, then there exists a matrix $N(B) \in Gl_{l \Tt}(k)$ associated to the action of $B$ on $(P_{\ls s})^\Tt$ with respect  to the basis $(E^*)^{\oplus \Tt}$ of $(P_{\ls s})^\Tt$. 

Let $f_1,\ldots,f_\Tt \in P_{\ls s}$, then set $f_i = b_{i1}e_1^* + \ldots + b_{il}e_l^* \in P_{\ls s}$ for all $i=1,\ldots,\Tt$. We denote the row vector of the coefficients of the $\Tt$ polynomials with respect to the basis $E^*$ by\\
\begin{equation*}
[\underline{f}]_{E^*} = (b_{11},\ldots,b_{1l}, \dots, b_{\Tt1},\ldots,b_{\Tt l}).
\end{equation*}
This leads to a natural generalization of Proposition $2.2$ in \cite{RossiElias}, which is stated for Gorenstein algebras instead.
\begin{pro}
\label{classisolevel}
Two Artinian $s$-level algebras $A_{\underline{f}}$ and $A_{\underline{g}}$ of type $\Tt$ are isomorphic if and only if there exists $\varphi \in Aut_a(R/\call{M}^{s+1})$ and there is an invertible matrix $B \in Gl_{\Tt}(R/\call{M}^{s+1})$ such that 
\begin{equation*}
[\underline{g}]_{E^*} (^t N(B) M(\varphi^{\oplus \Tt})) = [\underline{f}]_{E^*},
\end{equation*}
where $M(\varphi^{\oplus \Tt})$ is the matrix associated to the homomorphism which consists of $\Tt$ copies of $\varphi$.
\end{pro}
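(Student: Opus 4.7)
The plan is to derive the statement as a direct matrix-translation of Lemma \ref{lemmaconv}. First I would observe that, since $\call{M}^{s+1}\subseteq Ann_R(\underline f)\cap Ann_R(\underline g)$, every $k$-algebra isomorphism $A_{\underline f}\simeq A_{\underline g}$ is induced by (and uniquely determines) an element of $Aut_a(R/\call{M}^{s+1})$, via the sequence of morphisms (\ref{groups}); any such $\varphi$ has a lift to $Aut_a(R)$ which can be plugged into Lemma \ref{lemmaconv}. That lemma then produces $B\in Gl_\Tt(R)$ with
\[
{}^t\bigl((\varphi^*)^{-1}(f_1),\ldots,(\varphi^*)^{-1}(f_\Tt)\bigr)=B\circ{}^t(g_1,\ldots,g_\Tt)
\]
in $(P_{\ls s})^\Tt$. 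Since both sides live in $(P_{\ls s})^\Tt$, only the class of $B$ modulo $\call{M}^{s+1}$ matters for this identity, and this class is still invertible in $Gl_\Tt(R/\call{M}^{s+1})$ (its determinant is a unit mod $\call{M}^{s+1}$).

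The rest is pure linear algebra. By the very definition of $N(B)$, the column coordinate vector of $B\circ{}^t\underline g$ in the basis $(E^*)^{\oplus\Tt}$ is $N(B)\,{}^t[\underline g]_{E^*}$. On the other side, the commutative diagram preceding the proposition shows that the matrix of $\varphi^*$ in $E^*$ is ${}^t M(\varphi)$; passing to $\Tt$ copies, the matrix of $(\varphi^{\oplus\Tt})^*$ in $(E^*)^{\oplus\Tt}$ is the block-diagonal matrix ${}^t M(\varphi^{\oplus\Tt})$, so the column coordinate vector of $(\varphi^*)^{-1}(\underline f)$ equals $\bigl({}^t M(\varphi^{\oplus\Tt})\bigr)^{-1}{}^t[\underline f]_{E^*}$. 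Equating these two column vectors, left-multiplying by ${}^t M(\varphi^{\oplus\Tt})$, and transposing the resulting identity yields
\[
[\underline f]_{E^*}=[\underline g]_{E^*}\,\bigl({}^t N(B)\,M(\varphi^{\oplus\Tt})\bigr),
\]
which is the displayed formula. For the converse I would run the same chain of manipulations backwards: any matrix identity of this form produces an invertible $B\in Gl_\Tt(R/\call{M}^{s+1})$ and a $\varphi\in Aut_a(R/\call{M}^{s+1})$ satisfying condition (ii) of Lemma \ref{lemmaconv}, hence an isomorphism $\varphi(A_{\underline f})=A_{\underline g}$.

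The only genuine difficulty, and the place I would expect to spend the most time checking, is the bookkeeping between row and column coordinate vectors and between the maps $\varphi$, $\varphi^*$, $\varphi^{\oplus\Tt}$ and their matrices. The simultaneous appearance of a transposed ${}^t N(B)$ and an \emph{un}transposed $M(\varphi^{\oplus\Tt})$ in the final identity is exactly what falls out of this transpose juggling, and keeping every transpose in its correct place is the main source of potential error in writing the argument out cleanly.
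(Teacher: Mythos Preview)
Your approach is correct and is precisely what the paper has in mind: the paper gives no explicit proof of this proposition, presenting it instead as the direct matrix translation of Lemma~\ref{lemmaconv} (and as the natural generalization of Proposition~2.2 in \cite{RossiElias}). Your write-up supplies exactly the bookkeeping between $\varphi$, $\varphi^*$, $N(B)$, and the bases $E$, $E^*$ that the paper leaves implicit.
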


The above result allows us to translate the difficult problem of the classification of level local algebras up to isomorphism into a problem of linear algebra. 
 
\section{Compressed level local algebras which are graded}

If $A_f$ is a Gorenstein algebra with $h$-vector $H=(1,m,n,1)$, J. Elias and M.E. Rossi proved in \cite{RossiElias}  that $A_f$ is graded if and only if $H$ is admissible as the $h$-vector of a graded Gorenstein algebra. This no longer true if $A$ is level. For instance $H=(1,4,5,6)$ is achieved by a level graded algebra (see Example \ref{es1}), however if we consider 
\begin{equation*}
f_1:= x^3+t^2, \ \ \ f_2:=x^2y, \ \ \ f_3:=xy^2, \ \ \ f_4:=z^3, \ \ \ f_5:=xz^2, \ \ \ f_6:=y^3 
\end{equation*}
$A_{\underline{f}} = R/Ann(\langle f_1,\ldots,f_6\rangle_R)$ has $h$-vector $H$ but it is not isomorphic to its associate graded algebra $G$, because $Q(0)= A_{\underline{F}}$ has $h$-vector $(1,3,5,6)$.  

Another direction of investigation arises from the fact that in the Gorenstein case, as already pointed out, $A$ is graded if and only if its $h$-vector is admissible for a graded Gorenstein algebra, but this fact is also equivalent to say that $A$ has maximal Hilbert function. In fact, if $A$ has $h$-vector $(1,m,n,1)$, then necessarily $n \ls m$, and $A$ is graded if and only if equality holds. More generally, if $A$ is $3$-level of type $\Tt$ with $h$-vector $(1,m,n,\Tt)$, then $n \ls \Tt m$ (see Theorem \ref{mainteor}), and when equality holds $A$ is called {\it compressed level} or, for brevity, just {\it compressed}. We will prove that if $A_{\underline{f}}$ is a compressed level local algebra of socle degree 3, then it is graded. We need some further definitions and results before proving it. Let $F \in P_3$ be a form of degree three. Define:
\begin{compactitem}
\item $\Delta_1(F)$ the $m \times \binom{m+1}{2}$ matrix which $j$-th row is the vector of the coefficients of $\partial_j F$ ordered following lex order on the dual basis $\{\frac{x_1^2}{2},x_1x_2,\ldots,\frac{x_m^2}{2}\}$ of $P_2$.
\item $\Delta_2(F)$ the $\binom{m+1}{2} \times m$ matrix which $j$-th row is the vector of the coefficients of $\partial_{\underline{s}} F$ ordered following lex order on $P_1 = \{x_1,\ldots,x_m\}$. Here $\underline{s} \in \g{N}^m$, with $|s|=2$, is such that $x^{\underline{s}}$ is the $j$-th element of $\g{T}_m^2$ in lex order.
\end{compactitem}
Essentially, $\Delta_1(F)$ and $\Delta_2(F)$ are the matrices of the coefficients (with respect to the dual bases) of the  first and the second derivatives of $F$.

\begin{lemma}
\label{lemmatr}
Let $F \in P_3$ be a form of degree three. Then
\begin{equation*}
\Delta_1(F) =\  ^t \Delta_2(F).
\end{equation*}
\end{lemma}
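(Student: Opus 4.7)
The plan is a direct entry-by-entry comparison. Writing $F=\sum_{|\alpha|=3}a_{\alpha}x^{\alpha}$, I would fix indices $j\in\{1,\ldots,m\}$ and a multi-index $\beta\in\g{N}^m$ with $|\beta|=2$, and compute both the $(j,\beta)$ entry of $\Delta_{1}(F)$ and the $(\beta,j)$ entry of $\Delta_{2}(F)$ in terms of the coefficients $a_{\alpha}$, then observe they coincide.

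For $\Delta_{1}(F)$, the Leibniz rule gives $\partial_{j}F=\sum_{\alpha}\alpha_{j}\,a_{\alpha}\,x^{\alpha-e_{j}}$; after the reindexing $\alpha=\beta+e_{j}$ this becomes $\sum_{|\beta|=2}(\beta_{j}+1)\,a_{\beta+e_{j}}\,x^{\beta}$. To read the coefficients with respect to the dual basis $\{x^{\beta}/\beta!\}$ I would multiply by $\beta!$; using the identity $(\beta+e_{j})!=(\beta_{j}+1)\,\beta!$, the $(j,\beta)$ entry of $\Delta_{1}(F)$ comes out to $(\beta+e_{j})!\,a_{\beta+e_{j}}$. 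Analogously, $\partial^{\beta}F=\sum_{\alpha}\frac{\alpha!}{(\alpha-\beta)!}\,a_{\alpha}\,x^{\alpha-\beta}$, and only terms with $\alpha=\beta+e_{k}$ survive (forcing $(\alpha-\beta)!=1$), so $\partial^{\beta}F=\sum_{k}(\beta+e_{k})!\,a_{\beta+e_{k}}\,x_{k}$; reading off the coefficient of $x_{j}$ yields $(\beta+e_{j})!\,a_{\beta+e_{j}}$ again.

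Since both entries equal $(\beta+e_{j})!\,a_{\beta+e_{j}}$ for every pair $(j,\beta)$, the desired relation $\Delta_{1}(F)={}^{t}\Delta_{2}(F)$ follows. There is no real obstacle here: both quantities compute the scalar $((x_{j}\cdot x^{\beta})\circ F)(0)$, so the equality is a combinatorial manifestation of the symmetry $x_{j}\circ(x^{\beta}\circ F)=x^{\beta}\circ(x_{j}\circ F)$ of the $R$-module action on $P$, and the only care required is keeping track of the divided-power normalization $(x^{\alpha})^{\ast}=x^{\alpha}/\alpha!$ built into the choice of dual basis.
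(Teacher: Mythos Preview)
Your proof is correct and follows essentially the same route as the paper: a direct entry-by-entry comparison showing that the $(j,\beta)$ entry of $\Delta_1(F)$ and the $(\beta,j)$ entry of $\Delta_2(F)$ both equal the coefficient of the degree-three monomial indexed by $\beta+e_j$. The only cosmetic difference is that the paper expands $F$ in the dual basis from the outset, writing $F=\sum_{|\underline{i}|=3}\alpha_{\underline{i}}\,x^{\underline{i}}/\underline{i}!$, so that the common entry appears immediately as $\alpha_{\delta_r+\underline{s}}$ without carrying the factorial conversion; in your notation this is exactly $(\beta+e_j)!\,a_{\beta+e_j}$.
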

\begin{proof}[\scshape Proof]
Let $r=1,\ldots,m$. Set $\delta_r \in \g{N}^m$ the vector which entries are all $0$ except for position $r$, in which there is $1$. Also set $F= \sum_{|\underline{i}| = 3}\alpha_{\underline{i}} \ \frac{x^{\underline{i}}}{\underline{i}!}$, written in the dual basis. Notice that
\begin{equation*}
\partial_r F = \sum_{|\underline{j}| = 2}\beta_{\underline{j}} \ \frac{x^{\underline{j}}}{\underline{j}!} = \sum_{|\underline{j}| = 2}\alpha_{\underline{j} + \delta_r} \ \frac{x^{\underline{j}}}{\underline{j}!}.
\end{equation*}
Also
\begin{equation*}
\partial_{\underline{s}} F = \sum_{k=1}^m \gamma_{\delta_k} x_k = \sum_{k=1}^m \alpha_{\delta_k+\underline{s}} \ x_k.
\end{equation*}
Hence the coefficient of $\frac{x^{\underline{s}}}{\underline{s}!}$ in $\partial_r F$ is exactly the coefficient of $x_r$ in $\partial_{\underline{s}} F$, and it is equal to $\alpha_{\delta_r + \underline{s}}$. This means
\begin{equation*}
\Delta_1(F) =\  ^t \Delta_2(F).
\end{equation*}
\end{proof}
By the above lemma, $\Delta_1(F)$ and $\Delta_2(F)$ carry the same information about $F$, therefore we consider only $\Delta(F) := \Delta_1(F)$. Let $A_{\underline{F}}$ be a graded level algebra with $h$-vector $(1,m,n,\Tt)$. We can define:
\begin{eqnarray*}
\Delta(\underline{F}) = \left(
\begin{tabular}{c}
$\Delta(F_1)$ \\
 
\hline
$\vdots$ \\
\hline
$\Delta(F_\Tt)$
\end{tabular} \right)
\end{eqnarray*}
which is a $\Tt m \times \binom{m+1}{2}$ matrix. It is easy to see that
\begin{equation}
\label{maxrk}
rk(\Delta(\underline{F})) = n,
\end{equation}
hence $\Delta(\underline{F})$ has maximal rank if and only if 
$$
n = \min \left\{ \Tt m, \binom{m+1}{2}\right\},
$$
if and only if $A_{\underline{F}}$ is compressed.
\begin{lemma}
\label{q0compr}
Let $A_{\underline{f}}$ be a compressed level local algebra of socle degree three. Then $Q(0) = A_{\underline{F}}$ is compressed.
\end{lemma}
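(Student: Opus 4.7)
The plan is to use the rank interpretation of compressedness in (\ref{maxrk}) together with the strict monotonicity of the compressed bound in the embedding dimension. Set $m' := HF_{Q(0)}(1)$, the embedding dimension of $A_{\underline F}=Q(0)$. By Corollary~\ref{corc1} applied in degrees $1$ and $2$,
\[
m \;=\; HF_{A_{\underline f}}(1) \;=\; HF_{Q(0)}(1) + HF_{C(1)}(1) \;=\; m' + HF_{C(1)}(1),
\]
so $m'\leq m$, while $HF_{Q(0)}(2) = HF_{A_{\underline f}}(2) = n$.

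Next I would upper-bound $n$ in terms of $m'$. Since $Q(0) = A_{\underline F}$ is graded level of socle degree $3$ and type $\tau$ with embedding dimension $m'$, after a linear change of coordinates we may assume $F_1,\ldots,F_\tau \in k[x_1,\ldots,x_{m'}]$. Then $\partial_j F_i = 0$ for $j > m'$ and each $\partial_j F_i$ has coefficients supported on the $\binom{m'+1}{2}$ monomials of degree $2$ in $x_1,\dots,x_{m'}$, so the non-zero part of $\Delta(\underline F)$ fits inside a $\tau m' \times \binom{m'+1}{2}$ block. Combining this with (\ref{maxrk}),
\[
n \;=\; rk\bigl(\Delta(\underline F)\bigr) \;\leq\; \min\!\left\{\tau m',\, \binom{m'+1}{2}\right\}.
\]

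By hypothesis $A_{\underline f}$ is compressed, so $n = \min\{\tau m,\, \binom{m+1}{2}\}$. The function $x\mapsto \min\{\tau x,\binom{x+1}{2}\}$ is strictly increasing in $x\geq 1$ (both arguments of the min are), and $m'\leq m$, hence
\[
\min\!\left\{\tau m',\, \binom{m'+1}{2}\right\} \;\leq\; \min\!\left\{\tau m,\,\binom{m+1}{2}\right\} \;=\; n.
\]
Combined with the previous display this forces equality throughout, yielding $n = \min\{\tau m',\binom{m'+1}{2}\}$; by (\ref{maxrk}) applied to $A_{\underline F}$, this is exactly compressedness of $Q(0)$. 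As a byproduct one also reads off $m=m'$, i.e.\ $HF_{C(1)}(1)=0$, anticipating the graded statement of Theorem~\ref{mainteor2}.

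The only substantive step is isolating the correct compressed bound for $Q(0)$ in terms of its own embedding dimension $m'$ rather than $m$; the rest is bookkeeping through Corollary~\ref{corc1} and the identity (\ref{maxrk}). I do not foresee any genuine obstacle beyond the elementary monotonicity argument, provided one is careful with the reduction to the $m'$ variables actually appearing in $\underline F$.
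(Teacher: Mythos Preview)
Your argument is correct and follows essentially the same route as the paper: use Corollary~\ref{corc1} to get $m'\le m$ and $HF_{Q(0)}(2)=n$, bound $n$ above by $\min\{\tau m',\binom{m'+1}{2}\}$ because $Q(0)$ is level, and then combine with the compressed hypothesis and monotonicity to force equality (and hence $m'=m$). The only cosmetic difference is that you rederive the level bound via the rank of $\Delta(\underline F)$ after reducing to $m'$ variables, whereas the paper simply invokes the known inequality for level algebras.
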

\begin{proof}[\scshape Proof]
Let $(1,m,n,\Tt)$ be the $h$-vector of $A_{\underline{f}}$. Being $A_{\underline{f}}$ compressed we have
\begin{equation*}
n = \min \left\{ \Tt m, \binom{m+1}{2}\right\}.
\end{equation*}
Corollary \ref{corc1} shows that $Q(0)$ has $h$-vector $(1,r,n,\Tt)$, with $r \ls m$. But $Q(0)$ is a level algebra itself and hence its $h$-vector must satisfy the following inequality:
\begin{equation*}
\min \left\{ \Tt m, \binom{m+1}{2}\right\} = n \ls \min \left\{ \Tt r, \binom{r+1}{2}\right\}.
\end{equation*}
This yields $m \ls r$ and hence $r=m$. Therefore $Q(0)$ has $h$-vector $(1,m,n,\Tt)$ and it is a compressed algebra.
\end{proof}
\begin{teor}
\label{mainteor2}
Let $A_{\underline{f}}$ be a compressed level local algebra of type $\Tt$ and socle degree 3. Then $A_{\underline{f}}$ is graded.
\end{teor}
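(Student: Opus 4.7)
The plan is two-stage. First, a Hilbert function argument will show the associated graded ring $G$ is itself level. Second, we upgrade this to gradedness of $A_{\underline f}$ by explicitly constructing the isomorphism-witness $(\varphi, B)$ of Proposition \ref{classisolevel}.

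For the first stage, Lemma \ref{q0compr} gives that $Q(0) = A_{\underline F}$ has the same $h$-vector $(1,m,n,\Tt)$ as $A_{\underline f}$. The exact sequence $0 \to C(1) \to G \to Q(0) \to 0$ and the identity $HF_{A_{\underline f}} = HF_G$, combined with the fact that $C(1)$ is concentrated in degree $1$ (by (\ref{glevel})), force $HF_{C(1)}(1) = 0$. Hence $C(1) = 0$ and $G \simeq Q(0)$ by Proposition \ref{equivfactlev}; note however that levelness of $G$ is strictly weaker than gradedness of $A_{\underline f}$ itself, so further work is required.

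For the second stage, write $f_i = F_i + Q_i + L_i + c_i$ with $Q_i, L_i, c_i$ homogeneous of degrees $2, 1, 0$. The strategy is to eliminate these lower-order pieces degree by degree. The degree-$0$ and degree-$1$ corrections are always absorbable by $B$ alone, since third partials of the $F_j$'s produce every constant and second partials span all of $P_1$ (from $HF_{Q(0)}(1) = m$). The subtle case is degree $2$, which splits according to which term of $n = \min\{\Tt m, \binom{m+1}{2}\}$ is attained. If $\Tt m \gs \binom{m+1}{2}$, the dimension identity
\begin{equation*}
\dim_k\bigl(\langle \underline f\rangle_R \cap P_{\ls 2}\bigr) = 1 + m + n = \dim_k P_{\ls 2}
\end{equation*}
forces $P_{\ls 2} \subseteq \langle \underline f\rangle_R$, hence $F_i = f_i - (Q_i + L_i + c_i) \in \langle \underline f\rangle_R$, and the common dimension $1 + m + n + \Tt$ of $\langle \underline f\rangle_R$ and $\langle \underline F\rangle_R$ then forces $\langle \underline f\rangle_R = \langle \underline F\rangle_R$; one may take $\varphi = \operatorname{id}$.

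The remaining sub-case $\Tt m \ls \binom{m+1}{2}$ is the main obstacle. Here $(\langle \underline F\rangle_R)_2 \subsetneq P_2$ and $B$ alone cannot absorb a generic $Q_i$, so one must use a non-linear $\varphi(x_r) = x_r + U_r$ with $U_r \in P_2$. A direct computation modulo $\call M^4$ (using that $U_r U_s \in \call M^4$ vanishes) shows that $\varphi^*(f_i) - f_i$ is exactly linear in $U$, with degree-$2$ piece $\delta_i \in P_2$ given by a bilinear pairing of $U$ against the third derivatives of $F_i$. The technical heart of the proof is to show that the induced map
\begin{equation*}
(U_1, \dots, U_m) \longmapsto \bigl(\delta_i \bmod (\langle \underline F\rangle_R)_2\bigr)_{i=1}^{\Tt}
\end{equation*}
is surjective onto $\bigoplus_{i=1}^{\Tt} \bigl(P_2 / (\langle \underline F\rangle_R)_2\bigr)$. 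Surjectivity should follow from the rank condition $rk(\Delta(\underline F)) = \Tt m$ of compressedness combined with the inequality $m \gs 2\Tt - 1$ implied by $\Tt m \ls \binom{m+1}{2}$, which provides enough freedom in the $m$ quadratic parameters $U_r$ to cover the obstruction. Once surjectivity is established, choosing $U$ appropriately shifts every $Q_i$ into $(\langle \underline F\rangle_R)_2$, and the remaining degree-$1$ and degree-$0$ residues are cleaned up by $B$, yielding $A_{\underline f} \simeq A_{\underline F}$ and hence the theorem by Proposition \ref{procangrad}.
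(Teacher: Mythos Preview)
Your overall architecture matches the paper's: split on which term of $\min\{\Tt m,\binom{m+1}{2}\}$ is attained, dispose of the case $n=\binom{m+1}{2}$ by the containment $P_{\ls 2}\subseteq\langle\underline f\rangle_R$, and in the case $n=\Tt m$ use a $k$-algebra automorphism $\varphi(x_r)=x_r+U_r$ with $U_r\in P_2$ to kill the degree-$2$ tails. Two remarks on packaging: your first stage (showing $C(1)=0$, hence $G$ level) is correct but its conclusion is never used downstream---you only need Lemma \ref{q0compr} itself, which the paper also invokes directly at the end. Likewise the handling of $L_i,c_i$ via $B$ is superfluous: since $P_{\ls 1}\subseteq\langle\underline f\rangle_R$ one may simply replace each $f_i$ by $F_i+Q_i$ without changing the module, which is what the paper does at the outset.

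The substantive gap is at the step you flag as ``the technical heart'': you assert that the map $(U_1,\ldots,U_m)\mapsto(\delta_i\bmod(\langle\underline F\rangle_R)_2)$ is surjective, but the justification offered---the rank condition $rk(\Delta(\underline F))=\Tt m$ together with the dimension inequality $m\gs 2\Tt-1$---is only a parameter count, not a proof of full rank. A generic-dimension heuristic does not establish surjectivity of a specific linear map. The paper closes exactly this gap (in fact for the stronger, unquotiented system $[\underline F]_{E^*}M(\varphi)=[\underline f]_{E^*}$) by invoking the explicit block-upper-triangular structure of the matrices $M_j$ established by Elias--Rossi: each diagonal block $M_j^l$ has rows equal (up to a factor of $2$ on the first row) to prescribed columns of $\Delta(F_j)$, so that after stacking $M_1,\ldots,M_\Tt$ the rank of $M$ is governed by the rank of $\Delta(\underline F)$, which is $\Tt m$ by compressedness of $Q(0)$. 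Without this structural link between your perturbation map and $\Delta(\underline F)$, the argument is incomplete; supplying it is precisely where the work lies.
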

\begin{proof}[\scshape Proof]
First notice that, being $P_{\ls 1} = \{g \in P : \deg g \ls 1\} \subseteq \langle f_1,\ldots,f_\Tt \rangle_R$, we can  assume that $f_i = F_i + Q_i$, for each $i=1,\ldots,\Tt$, where $F_i$ and $Q_i$ are forms of degree $3$ and $2$ respectively. 
Let $(1,m,n,\Tt)$ be the $h$-vector of $A_{\underline{f}}$, then it has to be
\begin{equation*}
n = \min \left\{ \Tt m, \binom{m+1}{2} \right\}
\end{equation*}
since $A_{\underline{f}}$ is compressed. Assume $n=\binom{m+1}{2}$. Then $\dim_k (R_1 \circ \langle \underline{f}\rangle_R) = \binom{m+1}{2}$ and therefore $P_2 \subseteq \langle f_1,\ldots,f_\Tt \rangle_R$ too. In this case:
\begin{equation*}
\langle f_1,\ldots,f_\Tt \rangle_R \ = \ \langle F_1 + Q_1,\ldots,F_\Tt + Q_\Tt\rangle_R \ = \ \langle F_1,\ldots,F_\Tt\rangle_R
\end{equation*}
and hence $A_{\underline{f}}$ is graded by Proposition \ref{procangrad} (iii).

\medskip
Assume now $n=\Tt m$. We want to prove that there exists $\varphi \in Aut_{a}(R/\call{M}^4)$ such that
\begin{equation*}
[\underline{F}]_{E^*} M(\varphi) = [\underline{f}]_{E^*} = [\underline{F} + \underline{Q}]_{E^*}.
\end{equation*}
Consider $\varphi: R/\call{M}^4 \to R/\call{M}^4$ the $k$-algebra automorphism with the identity as Jacobian defined as follows on the variables:
\begin{equation*}
\varphi(x_h) := x_h+\sum_{|\underline{i}| = 2} a^h_{\underline{i}} x^{\underline{i}} \ \ \ \ \ \ h=1,\ldots,m
\end{equation*}
where $a^h_{\underline{i}} \in k$ for each $|\underline{i}| = 2$, $h=1,\ldots,m$ and $\underline{a} := (a^h_{\underline{i}} \ : \ |\underline{i}| = 2, h=1,\ldots,m)$ is a row vector of size $m\binom{m+1}{2}$. The matrix $M(\varphi)$ associated to $\varphi$ is an element of $Gl_r(k)$, where $r = \binom{m+3}{4}$, with respect to the basis $E$ of $R/\call{M}^4$ ordered by the deg-lexicographic order. Explicitly:
\begin{eqnarray*}
M(\varphi) = \left(
\begin{tabular}{c|c|c|c}
1 & 0 & 0 & 0       \\
\hline
0 & $I_m$ & 0 & 0           \\
\hline
 0 & $D$ & $I_{\binom{m+1}{2}}$ & 0    \\
\hline      
0 & 0 & $B$ &          $I_{\binom{m+2}{3}}$
\end{tabular}
\right)
\end{eqnarray*}
where for all $t \gs1$ $I_t$ denotes the $t \times t$ identity matrix. The first block of columns corresponds to the image $\varphi(1) = 1$. The second block of columns corresponds to the image of $\varphi(x_i)$, $i = 1,\ldots,m$. The third block of columns corresponds to the image of $\varphi(x^{\underline{i}})$, where $|\underline{i}| = 2$. Finally the fourth block of columns corresponds to the image of $\varphi(x^{\underline{i}})$ with $|\underline{i}| = 3$, that is the identity matrix. Hence $D$ is the $\binom{m+1}{2} \times m$ matrix defined by the coefficients of the degree two monomials of $\varphi(x_i)$, $i = 1,\ldots,m$ and $B$ is a $\binom{m+2}{3} \times \binom{m+1}{2}$ matrix defined by the coefficients of the degree three monomials appearing in $\varphi(x^{\underline{i}})$, with $|\underline{i}| = 2$. It is clear that
$M(\varphi)$ is determined by $D$ and the entries of $B$ are linear forms in the variables $a^h_{\underline{i}}$, with $|\underline{i}| = 2$, $h = 1,\ldots,m$. Now write $F_1,\ldots,F_\Tt$ in the dual basis $E^*$:
\begin{equation*}
F_j = \sum_{|\underline{i}| = 3}\alpha^j_{\underline{i}} \frac{x^{\underline{i}}}{\underline{i} !} \ \ \ \ \ \ j=1,\ldots,\Tt
\end{equation*}
and also $Q_1,\ldots,Q_\Tt$:
\begin{equation*}
Q_j = \sum_{|\underline{i}| = 2}\beta^j_{\underline{i}} \frac{x^{\underline{i}}}{\underline{i} !} \ \ \ \ \ \ j=1,\ldots,\Tt.
\end{equation*}
Set $\alpha^j := (\alpha_{\underline{i}}^j \ : \ |\underline{i}| = 3 )$ a row vector of size $\binom{m+2}{3}$ and similarly $\beta^j := (\beta_{\underline{i}}^j \ : \ |\underline{i}| = 2 )$ a row vector of size $\binom{m+1}{2}$. We have to solve the following linear system:
\begin{equation*}
[\alpha^j]_{E^*} B = [\beta^j]_{E^*} \ \ \ \ \ j=1,\ldots,\Tt
\end{equation*}
or equivalently:
\begin{equation*}
[\underline{\alpha}]_{E^*} \ B^{\oplus \Tt} = [\underline{\beta}]_{E^*}
\end{equation*}
where $\underline{\alpha} = (\alpha^j \ : \ j=1,\ldots,\Tt)$ is a row vector of size $\Tt \binom{m+2}{3}$ and similarly $\underline{\beta} = (\beta^j \ : \ j=1,\ldots,\Tt)$ is a vector of size $\Tt \binom{m+1}{2}$ and
\begin{eqnarray*}
B^{\oplus \Tt} = \left(
\begin{tabular}{c}
$B$ \\ 
\hline
$B$ \\ 
\hline
\vdots \\
\hline
$B$
\end{tabular}
\right)
\end{eqnarray*}
is a $\Tt \binom{m+2}{3} \times \binom{m+1}{2}$ matrix.
\medskip
J. Elias and M.E. Rossi prove in \cite{RossiElias} that for each $j=1,\ldots,\Tt$ there exists a $\binom{m+1}{2} \times m\binom{m+1}{2}$ matrix $M_j$ such that:
\begin{equation*}
[\alpha^j]_{E^*} B = \underline{a} \ ^t M_j
\end{equation*} where $\underline{a}$ are the coefficients defining the automorphism $\varphi$. Hence we have to solve
\begin{equation*}
\underline{a} \ ^t M = [\beta]_{E^*}
\end{equation*}
where 
\begin{eqnarray*}
M = \left(
\begin{tabular}{c}
$M_1$ \\
\hline
 $M_2$ \\
 \hline
 \vdots \\
 \hline
 $M_\Tt$
\end{tabular}
\right)
\end{eqnarray*}
is a $\Tt \binom{m+1}{2} \times m \binom{m+1}{2}$ matrix.

\medskip
For each $j = 1,\ldots,\Tt$ recall the matrices $\Delta(F_j)$ previously introduced. Elias and Rossi prove in \cite{RossiElias} that the matrices $M_j$ have the following upper-diagonal structure:
\begin{eqnarray*}
M_j = \left(
\begin{tabular}{c|c|c|c|c}
$M^1_j$ & * & \ldots & * & *      \\
\hline
0 & $M^2_j$ & \ldots & * & *           \\
\hline
 $\vdots$ & $\vdots$ & $\vdots$ & $\vdots$ & $\vdots$   \\

\hline      
0 & 0 & \ldots & $M^{m-1}_j$ & * \\
\hline
0 & 0 & \ldots & 0 & $M^m_j$ 
\end{tabular}
\right)
\end{eqnarray*}
where $M^l_j$ is a $(m-l+1) \times \binom{m+1}{2}$ matrix, $l=1,\ldots,m$, such that:
\begin{compactenum}[(i)]
\item $1$-st row of $M^1_j$ equals two times the $1$-st column of $\Delta(F_j)$. \\
\ \ \ \ \ \ $t$-th row of $M^1_j$ equals the $t$-th column of $\Delta(F_j)$, $t=2,\ldots,m$.
\item $1$-st row of $M^l_j$ equals two times the $l$-th column of $\Delta(F_j)$, for $l=2,\ldots,m$. \\
\ \ \ \ \ \ $t$-th row of $M^l_j$ equals the $(l+t-1)$-th column of $\Delta(F_j)$, $t=2,\ldots,m-l+1$, $l=2,\ldots,m$.
\end{compactenum}
In this way they show that $M_j$ has maximal rank because $\Delta(F_j)$ has maximal rank, hence the system $\underline{a} \ ^t M_j = [\beta^j]_{E^*}$ is compatible and this completes their proof. In our generalization to level algebras we have to prove that there is a solution $\underline{a}$ which satisfies $\underline{a} \ ^t M_j = [\beta^j]_{E^*}$ for all $j=1,\ldots,\Tt$, hence we have to show that the matrix $M$ has maximal rank. But in this case: \\
\begin{eqnarray*}
M = \left(
\begin{tabular}{c|c|c|c|c}
$M^1_1$ & * & \ldots & * & *      \\
\hline
0 & $M^2_1$ & \ldots & * & *           \\
\hline
 $\vdots$ & $\vdots$ & $\vdots$ & $\vdots$ & $\vdots$   \\

\hline      
0 & 0 & \ldots & $M^{m-1}_1$ & * \\
\hline
0 & 0 & \ldots & 0 & $M^m_1$  \\
\hline
\hline 
 
 $\vdots$ & $\vdots$ & $\vdots$ & $\vdots$ & $\vdots$   \\
  $\vdots$ & $\vdots$ & $\vdots$ & $\vdots$ & $\vdots$   \\
   $\vdots$ & $\vdots$ & $\vdots$ & $\vdots$ & $\vdots$   \\
 \hline
\hline 
$M^1_\Tt$ & * & \ldots & * & *      \\
\hline
0 & $M^2_\Tt$ & \ldots & * & *           \\
\hline
 $\vdots$ & $\vdots$ & $\vdots$ & $\vdots$ & $\vdots$   \\

\hline      
0 & 0 & \ldots & $M^{m-1}_\Tt$ & * \\
\hline
0 & 0 & \ldots & 0 & $M^m_\Tt$  \\
\end{tabular}
\right).
\end{eqnarray*}
Therefore the rank of $M$ is linked to the rank of $\Delta({\underline{F}})$. But we assumed that $A_{\underline{f}}$ is compressed, and hence $Q(0) = A_{\underline{F}}$ is compressed by Lemma \ref{q0compr}. So the matrix $\Delta({\underline{F}})$ has maximal rank equal to $\Tt m$ by (\ref{maxrk}). This means that the system is compatible and completes the proof.
\end{proof}
The converse of Theorem \ref{mainteor2} does not hold:
\begin{ese}
\label{es1222}
Consider the $h$-vector $(1,2,2,2)$ which is achieved, for example, by the following level local $k$-algebra:
\[
A_{\underline{f}} =  A_{\{x^3,y^3\}} = \frac{k\lb x,y \rb}{(x^4,xy,y^4)} \mbox{.}
\]
$A_{\underline{f}} \simeq A_{\underline{F}}$ is graded, however it is not compressed.
\end{ese}
 
\section{$h$-vectors of Artinian level local algebras}
For the aims of this section $A$ will denote an  Artinian $k$-algebra of socle degree $s$, type $\Tt$ and embedding dimension $m. $ We can assume that $A \simeq k \lb x_1,\ldots,x_m \rb/I$ with $I \subseteq (x_1,\ldots.x_m)^2$.

\subsection{Socle degree two}
 
Note that the case $s = 1$ ($h$-vector $(1,m)$) is trivial, since $I = (x_1,\ldots,x_m)^2$ and $A$ is  level.

\medskip 
\noindent If $s=2, $ then $A$ is not necessarily level. If $A$ is level, then  we can easily prove that $A$ actually is a standard {\it{graded}} $ k$-algebra. This is true because $G$ itself is level since $G=Q(0)$. Recalling the notation introduced in Section 2.2, if we write $A=A_{\underline{f}}$, then  $\langle \underline{f}\rangle_R = \langle \underline{F}\rangle_R $ because $P_{\ls1 } = \{g \in P: \deg g \ls 1\} \subseteq \langle \underline{F}\rangle_R. $

\medskip
\noindent If the socle degree is $s = 2$ all $h$-vectors are admissible for an Artinian level local algebra. This means that any numerical sequence $(1,m,\Tt)$ which satisfies the conditions of Macaulay's Theorem (in this case just $\Tt \ls \binom{m+1}{2}$) can be the $h$-vector of a 2-level local algebra. The result could be already known, nevertheless we present an easy and constructive proof, based on the use of Macaulay's inverse system.
 
\begin{pro}
\label{soc2}
Let $H = (1,m,\Tt)$ be a numerical sequence. Then $H$ is the $h$-vector of an Artinian $2$-level local $k$-algebra if and only if $\Tt \ls \binom{m+1}{2}$.
\end{pro}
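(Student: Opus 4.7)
The plan is to handle necessity by Macaulay's bound in degree $2$ and to handle sufficiency by an explicit construction via Macaulay's inverse system (Proposition \ref{invsyscorr2}). As observed just before the statement, a $2$-level local algebra is automatically graded (because in this case $G = Q(0)$), so the whole argument can be carried out with homogeneous forms of degree $2$.

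For necessity, I would simply observe that for any Artinian local $k$-algebra of embedding dimension $m$ one has $HF_A(2) \ls \dim_k P_2 = \binom{m+1}{2}$, which forces $\Tt \ls \binom{m+1}{2}$.

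For sufficiency, by Proposition \ref{invsyscorr2} together with formula (\ref{HF}), it is enough to exhibit $\Tt$ linearly independent quadratic forms $F_1, \ldots, F_\Tt \in P_2$ whose joint first partial derivatives span all of $P_1$: then $A := R/Ann_R(\langle F_1, \ldots, F_\Tt \rangle_R)$ will be a $2$-level local algebra of type $\Tt$ with $HF_A(1) = m$ and $HF_A(2) = \Tt$. The concrete choice I would take is $F_1 := x_1^2 + x_2^2 + \ldots + x_m^2$, whose partials $\partial_i F_1 = 2 x_i$ already span $P_1$ and therefore force the embedding dimension to be exactly $m$. Since $\Tt \ls \binom{m+1}{2} = \dim_k P_2$, I can extend $\{F_1\}$ to a linearly independent set in $P_2$ by appending $\Tt - 1$ further quadratic forms taken from a basis complementary to $F_1$ (for instance, $\Tt - 1$ suitably chosen monomials among the $x_i x_j$ with $1 \ls i \ls j \ls m$).

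The verification is then routine: the $F_j$ are homogeneous of degree $2$ and linearly independent by construction, so Proposition \ref{invsyscorr2} applies and produces a $2$-level local algebra of type $\Tt$; formula (\ref{HF}) then yields $HF_A(0) = 1$, $HF_A(1) = \dim_k \langle \partial_i F_j : i,j \rangle_k = m$, and $HF_A(2) = \dim_k \langle F_1, \ldots, F_\Tt \rangle_k = \Tt$. There is no real obstacle here: the numerical condition $\Tt \ls \binom{m+1}{2}$ is precisely the amount of room available in $P_2$, and the single form $\sum_i x_i^2$ already suffices to realize the correct embedding dimension. The resulting $A$ is moreover graded, matching the remark in the introduction that the construction sometimes yields particularly well-behaved algebras (here even generated by monomials, up to the single symmetrizing form $F_1$).
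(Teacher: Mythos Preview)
Your proof is correct and follows essentially the same route as the paper: necessity via Macaulay's bound, sufficiency via Proposition~\ref{invsyscorr2} and formula~(\ref{HF}) by exhibiting $\Tt$ linearly independent quadratic forms whose partials span $P_1$. The only difference is cosmetic: the paper splits into the cases $\Tt \gs m$ (using the first $\Tt$ quadratic monomials in lex order) and $\Tt < m$ (using $x_1^2,\ldots,x_{\Tt-1}^2,\ x_\Tt^2+\cdots+x_m^2$), whereas your single choice $F_1=\sum_i x_i^2$ handles both at once and avoids the case distinction.
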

\begin{proof}[\scshape Proof]
We only have to prove that if $\Tt  \ls \binom{m+1}{2}$, then we are able to construct an Artinian $2$-level local algebra with $h$-vector $H$. By virtue of Proposition \ref{invsyscorr2} it is enough to exhibit a set $\underline{f}=\{f_1,\ldots,f_\Tt\}$ of polynomials of degree two such that the $h$-vector of $A_{\underline{f}}$ is $H$. Assume  $\Tt \gs m$ and set
\begin{equation*}
\g{T}_m^2 := {\rm Supp}\left((x_1,\ldots,x_m)^2\right) = \{x_1^2,x_1x_2,\ldots,x_m^2\}.
\end{equation*}
Consider $\Tt$ homogeneous polynomials $f_1,\ldots,f_\Tt$ corresponding to the first $\Tt$ terms of $\g{T}_m^2$ with respect to the lexicographic order:
$$
f_1 = x_1^2, \ \ f_2 = x_1x_2, \ \ f_3 = x_1x_3, \ \ \ldots
$$

Assume $\Tt < m$ instead, then consider $\Tt$ homogeneous polynomials $f_1,\ldots,f_\Tt$, defined as follows: 
\begin{equation*}
f_1 = x_1^2, \ \ f_2 =x_2^2, \ \ \ldots\ldots \ \ f_{\Tt-1} = x_{\Tt-1}^2, \ \  f_\Tt = x_\Tt^2 + x_{\Tt + 1}^2 + \ldots + x_m^2.
\end{equation*}
According to (\ref{HF}) it is easy to see that  in both cases $A_{\underline{f}}$ has $h$-vector $H = (1,m,\Tt)$.
\end{proof}
  
\subsection{Socle degree three}
It is well known that $h$-vectors of $s$-level algebras of embedding dimension $m$ and type $\Tt$ have to satisfy the following inequality (see for instance \cite{FRLK} and \cite{ChoI}):
$$
h_i \ls \min\left\{\Tt \binom{m+s-i-1}{s-i},\binom{m+i-1}{i}\right\} \ \ \ \mbox{ for } i=0,\ldots,s.
$$
If the socle degree is $s=3$ and the $h$-vector is $(h_0,h_1,h_2,h_3) = (1,m,n,\Tt)$ the only significant inequality is
$$
h_2 = n  \ls  \min\left\{\Tt m,\binom{m+1}{2}\right\}.
$$
Clearly $n \ls \binom{m+1}{2}$ by Macaulay's Theorem. Using Macaulay's inverse system and the Q-decomposition we can easily show that $n \ls \Tt m$.
\begin{pro}
\label{condnec}
Let $A=R/I$ be an Artinian level local $k$-algebra with $h$-vector $H = (1,m,n,\Tt)$. Then 
\begin{equation*}
n \ls \Tt m.
\end{equation*}
\end{pro}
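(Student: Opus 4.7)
The approach is to translate the claim into a question about the inverse system using Macaulay's correspondence (Proposition \ref{invsyscorr2}). First I would write $A = A_{\underline{f}}$ where $\underline{f} = \{f_1,\ldots,f_{\Tt}\}$ is a set of polynomials of degree $3$ with linearly independent degree-$3$ leading forms $F_1,\ldots,F_{\Tt}$. By the equality (\ref{HF}), we have $n = HF_A(2) = \dim_k (\langle \underline{f} \rangle_R)_2$, where $(\cdot)_2$ denotes the subquotient defined in Section 2.1.

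Next I would reduce to the graded case by invoking Corollary \ref{corc1}, which gives $HF_A(2) = HF_{Q(0)}(2)$, together with Proposition \ref{q0omog}, which identifies $Q(0)$ with $A_{\underline{F}}$. Thus it suffices to show $HF_{A_{\underline{F}}}(2) \ls \Tt m$; equivalently, that the degree-$2$ component of the graded submodule $\langle F_1,\ldots,F_{\Tt}\rangle_R \subseteq P$ has dimension at most $\Tt m$.

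In the graded setting this bound is immediate. Since each $F_j$ is homogeneous of degree $3$, the only way to produce an element of degree $2$ by letting an operator $g \in R$ act via $\circ$ is to apply a single partial derivative $\partial_k$ (operators of order $0$ give degree $3$, operators of order $\gs 2$ give degree $\ls 1$). Hence
\[
\langle F_1,\ldots,F_{\Tt}\rangle_R \cap P_2 \; = \; \langle \partial_k F_j \; : \; 1 \ls k \ls m,\ 1 \ls j \ls \Tt \rangle_k,
\]
which has dimension at most the number of generators, namely $\Tt m$. This yields $n \ls \Tt m$.

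The real content of the argument is the reduction to the graded case via the $Q$-decomposition; once we are in the graded setting the bound is just a count of first partial derivatives, so I do not anticipate any serious obstacle. If one prefers, the reduction can be bypassed by arguing directly on the inverse system of $A_{\underline{f}}$: any element of $M \cap P_{\ls 2}$ with $M = \langle \underline{f}\rangle_R$ is obtained from first-order derivatives of the $f_i$ (higher-order derivatives land in $P_{<2}$ and are killed in the quotient), and the degree-$2$ leading part of $\partial_k f_i$ equals $\partial_k F_i$, so the same counting bound gives the claim directly.
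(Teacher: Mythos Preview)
Your proposal is correct and follows essentially the same route as the paper: reduce to $Q(0)\simeq A_{\underline{F}}$ via Corollary \ref{corc1} (and Proposition \ref{q0omog}), then bound $HF_{A_{\underline{F}}}(2)$ by the number $\Tt m$ of first partial derivatives of the $F_j$. Your alternative direct argument bypassing the $Q$-decomposition is also valid and amounts to the same count after observing that the $F_j$ are linearly independent.
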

\begin{proof}[\scshape Proof]
By Corollary \ref{corc1}
\begin{equation*}
n = HF_A(2) = HF_{Q(0)}(2).
\end{equation*}
Being $Q(0)$ a 3-level graded algebra there exist $F_1,\ldots,F_\Tt$ forms of degree three such that $Q(0) = A_{\underline{F}}$. Moreover, being $F_1,\ldots,F_\Tt$ homogeneous polynomials, it is easy to see that
\begin{equation*}
(\langle F_1,\ldots,F_\Tt\rangle_R)_2  =  \langle \partial \underline{F}\rangle_k  = \langle \partial F_1,\ldots,\partial F_\Tt\rangle_k.
\end{equation*}
Hence
\begin{equation*}
n =  HF_{A_{\underline{F}}}(2) = \dim_k (\langle \partial \underline{F}\rangle_k) \ls \Tt m
\end{equation*}
because $\partial F_1,\ldots,\partial F_\Tt$ are at most $\Tt m$ linearly independent forms of degree two. \\
\end{proof}

\vskip 3mm 
We prove now that the condition $n \ls \Tt m$, which is apparently a weak request, is sufficient to characterize all admissible $h$-vectors for level local $k$-algebras with socle degree three.  
\begin{teor}
\label{mainteor}
Let $H=(1,m,n,\Tt)$ be an $h$-vector. Then $H$ is the $h$-vector of an Artinian level local algebra if and only if $n \ls \Tt m$.
\end{teor}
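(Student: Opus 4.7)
The necessity of $n \le \Tt m$ is exactly Proposition \ref{condnec}. For sufficiency, I plan to construct an explicit Artinian level local algebra with $h$-vector $H=(1,m,n,\Tt)$ via Macaulay's inverse system: by Proposition \ref{invsyscorr2}, it suffices to produce $\Tt$ polynomials $f_1,\ldots,f_\Tt$ of degree $3$ with linearly independent leading cubic forms $F_1,\ldots,F_\Tt$ such that $A_{\underline f}$ attains the prescribed Hilbert function. Using the identifications of Section 2, this boils down to arranging three simultaneous numerical conditions: $\dim_k\langle F_1,\ldots,F_\Tt\rangle_k = \Tt$ (giving $h_3$), $\dim_k\langle\partial\underline F\rangle_k = n$ (giving $h_2$, via the formula $M_2 = \langle \partial\underline F\rangle_k$ derived in Section 2), and $\dim_k M_1 = m$ (giving $h_1$). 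Levelness will then come for free from Proposition \ref{invsyscorr2}.

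My strategy is a two-step construction. First, I choose a subset of variables $x_1,\ldots,x_{n'}$ with $1\le n' \le m$ and $\Tt$ linearly independent cubic forms $F_1,\ldots,F_\Tt$ supported in those variables so that $\dim_k\langle\partial\underline F\rangle_k = n$. Second, if $n'<m$, I perturb the first polynomial by a quadratic tail $Q := x_{n'+1}^2+\cdots+x_m^2$, setting $f_1 := F_1+Q$ and $f_j := F_j$ for $j\ge 2$. The extra first partials $\partial_iQ = 2x_i$ for $n'<i\le m$ then contribute precisely the missing linear forms to $M_1$, lifting $h_1$ up to $m$; meanwhile $Q$ is too low in degree to affect $h_3$ or $h_2$. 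This mechanism is exactly the one realizing the toy $h$-vector $(1,2,1,1)$ via $f = x_1^3 + x_2^2$.

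The choice of $n'$ is the crux and depends on both $n$ and $\Tt$. For small $\Tt$ one can typically take $n' = m$: starting from $F_1 = x_1^3+\cdots+x_m^3$ (which supplies $m$ linearly independent quadratic derivatives and makes every variable appear among second derivatives), further cubic monomials such as $x_i^2 x_j$ or $x_i x_j x_k$ can be appended one at a time, each contributing $1$, $2$, or $3$ new linearly independent quadratic derivatives, stopping exactly when the total derivative dimension reaches $n$. When $\Tt$ is comparatively large --- as in $H=(1,3,3,4)$, where no four linearly independent cubics in three variables have derivatives spanning only a three-dimensional subspace of $P_2$ --- one must instead take $n'<m$: the four basic cubics $x_1^3,x_1^2x_2,x_1x_2^2,x_2^3$ in two variables span all of $P_2(x_1,x_2)$ via their derivatives, and adjoining $Q = x_3^2$ to $f_1$ supplies the missing variable $x_3$. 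In general I select $n'$ minimal subject to $n\le\binom{n'+1}{2}$ and such that enough cubics in $x_1,\ldots,x_{n'}$ exist with collective first partials spanning a chosen $n$-dimensional subspace; the hypothesis $n\le \Tt m$ together with the Macaulay bounds $n\le \binom{m+1}{2}$ and $\Tt\le n^{\langle 2\rangle}$ ensures that a suitable $n'\le m$ is always available.

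The main obstacle is the combinatorial step: for every admissible triple $(n,\Tt,m)$, exhibiting $\Tt$ linearly independent cubics in $x_1,\ldots,x_{n'}$ whose collective first partials span an $n$-dimensional subspace of $P_2$ on the nose. I expect to handle this by an explicit monomial recipe, parametrized by the integer decomposition of $n$ with respect to $n'$, $m$, and $\Tt$, splitting into a few subcases according to the relationship between $\Tt$ and $\binom{n'+2}{3}$. Once the $F_i$'s (and the tail $Q$, if needed) are in place, the Hilbert function of $A_{\underline f}$ is read off directly from the identifications in Section 2, and the level property of $A_{\underline f}$ is immediate from Proposition \ref{invsyscorr2}.
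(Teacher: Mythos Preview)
Your overall plan---necessity from Proposition~\ref{condnec}, sufficiency by exhibiting $\Tt$ cubic forms (with a quadratic tail $Q$ on one of them) via Proposition~\ref{invsyscorr2}---is exactly the paper's approach. The mechanism you describe for $Q$ is precisely what appears in Propositions~\ref{prop1} and~\ref{proamm3}, and your observation that $h_2=\dim_k\langle\partial\underline F\rangle_k$ is unaffected by $Q$ is correct and is used implicitly throughout.

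Where your proposal remains a sketch is exactly where you flag it: the ``explicit monomial recipe'' is the entire substance of the sufficiency proof, and your informal bifurcation into ``small $\Tt$, take $n'=m$'' versus ``large $\Tt$, take $n'<m$'' is not the right organizing principle. For instance $H=(1,5,3,2)$ has small $\Tt$ but $n<m$, so starting from $F_1=x_1^3+\cdots+x_m^3$ already overshoots $h_2$ and one is forced to take $n'<m$ anyway. The paper instead splits on where $n$ sits relative to $m$ and $\Tt$: the case $m>n\gs\Tt$ (Proposition~\ref{prop1}) is handled by $f_i=x_i^3$ for $i<\Tt$ and $f_\Tt=x_\Tt^3+\cdots+x_n^3+x_{n+1}^2+\cdots+x_m^2$; the case $\max\{m,\Tt\}\ls n\ls\Tt m$ (Proposition~\ref{proamm1}) admits a fully homogeneous construction built from the monomial families $\call D_j=\{x_i^2x_{i+j}:i=1,\ldots,m\}$; and the case $n<\Tt$ (Proposition~\ref{proamm3}) uses the Macaulay decomposition $n=\binom{l+1}{2}+h$ together with monomials from $x_1\cdot\g T_l^2$ and a few extras involving $x_{l+1}$, plus the quadratic tail. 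This trichotomy, rather than the size of $\Tt$ alone, is what makes the combinatorics go through; filling it in is the part of the proof you still owe.
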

The proof will be divided in three steps. In each case we will find a strategy for presenting  the generators of the inverse system (and hence the ideal). First we investigate the non-increasing vectors $m>n \gs \Tt >0$. Notice that in this range the $h$-vectors are achieved by level local algebras, not necessarily graded. 
\begin{pro}
\label{prop1}
Let $H = (1,m,n,\Tt)$ be an $h$-vector. If $m>n \gs \Tt >0$, then $H$ is the $h$-vector of an Artinian $3$-level local algebra.
\end{pro}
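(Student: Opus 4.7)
I would prove the proposition by explicit construction via Macaulay's inverse system. By Proposition \ref{invsyscorr2}, it suffices to produce $\Tt$ polynomials $f_1,\ldots,f_\Tt \in P$ of degree $3$ with $k$-linearly independent leading forms such that $A_{\underline{f}}$ has $h$-vector $(1,m,n,\Tt)$, and then verify the three Hilbert function values using $HF_{A_{\underline{f}}}(i) = \dim_k M_i$ from (\ref{HF}). I partition the variables into three blocks of sizes $\Tt$, $n-\Tt$, $m-n$ (each nonnegative by hypothesis) and set
\[
f_1 = x_1^3 + \sum_{k=\Tt+1}^{n} x_1 x_k^2 + \sum_{k=n+1}^{m} x_k^2, \qquad f_j = x_j^3 \quad (j = 2,\ldots,\Tt).
\]
Writing $f_j = F_j + Q_j$ with $F_j$ the degree-$3$ leading form, the only nontrivial lower-order correction is the degree-$2$ summand $Q_1 := \sum_{k=n+1}^m x_k^2$ of $f_1$.

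\textbf{Verification of the three dimensions.} For $M_3$: each $F_j$ contains the pure cube $x_j^3$ for a distinct index $j \in \{1,\ldots,\Tt\}$, so the $F_j$ are linearly independent and $\dim M_3 = \Tt$. For $M_2$: a direct computation shows that the only nonzero first partials of the $F_j$'s are $\partial_1 F_1 = 3x_1^2 + \sum_{k=\Tt+1}^n x_k^2$, $\partial_k F_1 = 2 x_1 x_k$ for $\Tt+1 \ls k \ls n$, and $\partial_j F_j = 3x_j^2$ for $2 \ls j \ls \Tt$. These $1 + (n-\Tt) + (\Tt-1) = n$ quadratic forms involve pairwise distinct leading monomials, so they are linearly independent and $\dim M_2 = n$. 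For $M_1$: the second partials of the $F_j$'s span $\langle x_1,\ldots,x_n\rangle$; moreover, for each $i \in \{n+1,\ldots,m\}$ one has $\partial_i F_1 = 0$ while $\partial_i f_1 = \partial_i Q_1 = 2 x_i$, a degree-$1$ element of $M$ contributing $x_i$ to $M_1$. Together these give $M_1 = \langle x_1,\ldots,x_m\rangle$, so $\dim M_1 = m$, and $A_{\underline{f}}$ has the required $h$-vector.

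\textbf{Remark on the argument.} There is no real obstacle — the proof reduces to a bookkeeping check of three dimensions — but the conceptual point worth highlighting is the role of $Q_1$. Without it, the leading forms $F_1,\ldots,F_\Tt$ alone would yield embedding dimension $n$ rather than $m$; the degree-$2$ summand $Q_1$ contributes the extra $m-n$ linear forms in $M_1$ through the mechanism $\partial_i f_1 = \partial_i Q_1$ at indices where $\partial_i F_1$ vanishes. This non-graded flexibility is precisely what makes the ``top-heavy'' regime $m > n$ realizable in the local setting, even though such $h$-vectors are typically inadmissible for graded level algebras, in line with the author's remark preceding Theorem \ref{mainteor}.
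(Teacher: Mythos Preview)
Your proof is correct and takes essentially the same approach as the paper: an explicit inverse-system construction that partitions the variables into three blocks of sizes $\Tt$, $n-\Tt$, $m-n$, assigns pure cubes to the first block, and attaches a degree-$2$ tail $\sum_{k>n} x_k^2$ to one generator to raise the embedding dimension from $n$ to $m$. The only cosmetic difference is the handling of the middle block---the paper absorbs it into $f_\Tt$ via additional pure cubes $x_\Tt^3+\cdots+x_n^3$, whereas you absorb it into $f_1$ via mixed terms $x_1x_k^2$---but the mechanism and the verification are the same.
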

\begin{proof}[\scshape Proof] 
Consider the following $\Tt$ polynomials:
\begin{equation*}
f_1:=x_1^3, \ \ f_2:=x_2^3, \ \ f_3:=x_3^3, \ldots \ldots f_{\Tt-1}:=x_{\Tt-1}^3, \ \ f_\Tt:=x_\Tt^3+\ldots +x_n^3 + x_{n+1}^2 + \ldots + x_m^2.
\end{equation*}
By using (\ref{HF}) it is easy to prove that:
\begin{eqnarray*}
\left\{ \begin{array}{ll}
HF_{A_{\underline{f}}}(1) = \dim_k(\langle x_1,\ldots,x_m \rangle_k) = m \\ 
HF_{A_{\underline{f}}}(2) = \dim_k(\langle x_1^2,\ldots,x_n^2 \rangle_R) = n \\
HF_{A_{\underline{f}}}(3) = \Tt
\end{array} \right. \end{eqnarray*}
and hence $H$ is the $h$-vector of $A_{\underline{f}}$.
\end{proof}

In the next numerical range the $h$-vectors will be admissible for level {\it{graded}} algebras, not just local. This is because we are able to present homogeneous inverse system polynomials.
\begin{pro}
\label{proamm1}
Let $H = (1,m,n,\Tt)$ be an $h$-vector. If 
\begin{equation*}
\max \{\Tt,m\} \ls n \ls \Tt m
\end{equation*}
then $H$ is the $h$-vector of an Artinian $3$-level graded algebra. 
\end{pro}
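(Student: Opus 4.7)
The plan is to apply the Macaulay correspondence of Proposition \ref{invsyscorr2}: I would construct $\Tt$ linearly independent homogeneous cubic forms $F_1, \ldots, F_\Tt \in P_3$ whose first partials span an $n$-dimensional subspace of $P_2$ and whose second partials span all of $P_1$. Since the $F_i$ are homogeneous, the resulting $A_{\underline{F}}$ is automatically graded, and equation (\ref{HF}) then delivers the desired $h$-vector $(1,m,n,\Tt)$.

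For the construction, I would begin with $F_1 = x_1^3 + x_2^3 + \cdots + x_m^3$. Its first partials $3 x_i^2$ span an $m$-dimensional subspace of $P_2$, and its second partials $6 x_i$ exhaust $P_1$, securing $HF(1) = m$ independently of later choices. Writing $n = m + s$ with $0 \ls s \ls \binom{m}{2}$ (the upper bound is implicit in $H$ being an $h$-vector, via Macaulay), I would then pick a subset $S \subseteq \{(i,j) : 1 \ls i < j \ls m\}$ of size $s$ and take the target subspace
\[
V := \langle x_i^2 : 1 \ls i \ls m\rangle_k \oplus \langle x_i x_j : (i,j) \in S\rangle_k \subseteq P_2
\]
of dimension $n$. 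The remaining forms $F_2, \ldots, F_\Tt$ are to be chosen inside the ambient space $W := \{F \in P_3 : \partial F \subseteq V\}$ so that their first partials, together with those of $F_1$, span all of $V$. Natural building blocks inside $W$ are the pure cubes $x_j^3$ and the mixed monomials $x_i^2 x_j$, $x_i x_j^2$ for $(i,j) \in S$; when $\Tt < n$ one combines several such monomials (e.g.\ $F_i = x_a^3 + x_b^2 x_c$) so that a single $F_i$ contributes to several coordinates of $V$.

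The main obstacle is the padding/counting step, namely verifying that $\dim_k W \gs \Tt$ and that a suitable $\Tt$-dimensional subspace exists with the required spanning property. For the dimension bound, the above monomials give $\dim_k W \gs m + 2 |S| = 2n - m \gs n \gs \Tt$, where the last inequality is precisely the hypothesis $n \gs \Tt$; this is where that bound becomes crucial. For the spanning property, observe that the $n$ monomials $\{x_j^3 : j\} \cup \{x_i^2 x_j : (i,j) \in S\} \subseteq W$ already cover $V$ by their partials, so if $\Tt = n$ one can take them directly, and if $\Tt < n$ one can take appropriate linear combinations so that the resulting $\Tt$ forms are linearly independent yet retain the spanning property for $V$. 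This is an elementary linear-algebra reduction, but it requires a careful case-split based on how $\Tt$ compares to $n$ and $m$. Once the $F_i$ are in hand, the verification that $A_{\underline{F}}$ has the correct $h$-vector is immediate from (\ref{HF}), and the algebra is graded since the $F_i$ are homogeneous.
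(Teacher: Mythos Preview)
Your overall strategy coincides with the paper's: fix $F_1=x_1^3+\cdots+x_m^3$ to secure $HF(1)=m$, then assemble the remaining $F_i$ from monomials of type $x_a^2x_b$ so that their first partials fill out a prescribed $n$-dimensional subspace of $P_2$. The paper does exactly this, organising the monomials into the cyclic families $\mathcal D_j=\{x_i^2x_{i+j}\}$ and running a four-case analysis according to the division $n-\Tt=h(m-1)+l$.

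There is, however, a genuine gap at the step you label ``elementary linear algebra.'' Your concrete proposal is to take linear combinations of the $n$ monomials $\{x_j^3\}\cup\{x_i^2x_j:(i,j)\in S\}$ (one mixed monomial per pair, with $i<j$). This does \emph{not} work in general. Take $m=3$, $\Tt=2$, $n=6$, so $S=\{(1,2),(1,3),(2,3)\}$ and $V=P_2$. Any $F_2$ in $\langle x_1^3,x_2^3,x_3^3,x_1^2x_2,x_1^2x_3,x_2^2x_3\rangle_k$ has
\[
\partial_3 F_2\in\langle x_1^2,x_2^2,x_3^2\rangle_k,\qquad
\partial_2 F_2\equiv c\,x_2x_3 \pmod{\text{squares}},\qquad
\partial_1 F_2\equiv a\,x_1x_2+b\,x_1x_3 \pmod{\text{squares}},
\]
so modulo squares the partials of $F_2$ span at most a $2$-dimensional space; together with $F_1$ you reach dimension at most $5<6$. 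The remedy is to allow \emph{both} orientations $x_i^2x_j$ and $x_j^2x_i$ for each pair and to arrange them cyclically (e.g.\ $F_2=x_1^2x_2+x_2^2x_3+x_3^2x_1$), which is precisely what the paper's $\mathcal D_j$ construction does. That construction, together with the division $n-\Tt=h(m-1)+l$ and the inequality $h\le\Tt$ coming from $n\le\Tt m$, is the actual content of the proof; note that your write-up never invokes $n\le\Tt m$, yet it is indispensable (the partials of $\Tt$ cubics can span at most $\Tt m$ quadrics). So the ``careful case-split'' you defer is not a bookkeeping afterthought but the substance of the argument, and your stated pool of $n$ monomials is too small to carry it.
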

\begin{proof}[\scshape Proof] 
For each $j=0,\ldots,\left[ \frac{m}{2} \right]$ we define:
\begin{equation*}
\call{D}_j := \{x_i^2x_{i+j}:i=1,\ldots m \},
\end{equation*}
where, by convention, whenever an index of a variable is $l \gs m+1$ one has to read $l-m$ (for example $x_{m+1}$ is $x_1$).
We equip each $\call{D}_j$ with the following order $\preceq_j$:
\begin{equation*}
x_i^2x_{i+j} \preceq_j x_k^2x_{k+j} \ \ \iff \ \  i \ls k. \ \ \ \ \ \ \ \ 
\end{equation*}
Denote by $\#A$ the cardinality of a finite set $A$. Observe that $\call{D}_i \cap \call{D}_j = \emptyset$ if $i\ne j$ and also $\# \call{D}_j = m$ for all $j$. Set $\call{D} = \cup_j \call{D}_j$. From the previous remarks we deduce that
\begin{equation*}
\# \call{D} = m \left( \left[ \frac{m}{2} \right] +1 \right) \gs \binom{m+1}{2}.
\end{equation*}
\underline{} \\
Using the terms of the set $\call{D}$ we want now to exhibit $\Tt$ forms of degree three $f_1,\ldots f_\Tt \in P=k[x_1,\ldots,x_m]$ with disjoint supports such that
\begin{equation*}
\# \mbox{Supp }\{f_1,\ldots,f_\Tt \} = \dim_k (\langle \partial f_1,\ldots, \partial f_\Tt \rangle_R) = n.
\end{equation*}
Since $H$ is an $h$-vector, we have:
\begin{equation*}
n=HF(2) \ls \binom{m+1}{2}.
\end{equation*}
Moreover $n \ls t m$ by assumption, therefore $n \ls \min \left \{ \binom{m+1}{2},\Tt m \right\}$. This shows that we can satisfy the request $\#$Supp $\{f_1,\ldots,f_\Tt \} = n$, since $n \ls \# \call{D}$. 

Set $\tilde{n} := n-\Tt$ ($n \gs \Tt$ by assumption), then there exist $h,l \in \g{N}$ such that
\begin{equation*}
\tilde{n} = h(m-1) + l \ \ \ \mbox{ with } 0 \ls l < m-1.
\end{equation*}
Notice that we have
\begin{equation}
\label{formula}
h(m-1) \ls h(m-1)+l = n-\Tt \ls \Tt m - \Tt = \Tt(m-1).
\end{equation}
If $m=1$, the $h$-vector is forced to be $(1,1,1,1)$ and therefore we have $\tilde{n} = h = 0 < 1 = \Tt$. If $m >1$ we can divide by $(m-1)$, hence we always get $\Tt \gs h$. Let us consider different cases: \\
\begin{compactenum}[\rm (a)]
\item If $\Tt=h$, then define
\begin{eqnarray*}
f_j := \sum_{i=1}^m x_i^2x_{i+j-1} \ \ \mbox{ for } j=1,\ldots,h.
\end{eqnarray*}
Notice that for all $j=1,\ldots,h,$ the monomials which appear in the support of $f_j$ are  elements of the set $\call{D}_{j-1}$. Also, being $\Tt = h$, the inequalities in (\ref{formula}) become equalities. Hence
\begin{eqnarray*}
\left \{
\begin{array}{ll}
n - \Tt = \Tt m- \Tt \\ 
h(m-1) = h(m-1) + l \end{array} \right.
\ \ \Rightarrow \ \
\left \{ \begin{array}{ll} 
n = \Tt m \\ 
l =0
\end{array} \right.
\end{eqnarray*}
Set now
\begin{equation*}
V^{(1)} :=  \langle \partial f_1,\ldots,\partial f_h \rangle_k = \langle x_ix_{i+j} : i = 1,\ldots,m \ \ j=0,\ldots,h-1 \rangle_k,
\end{equation*}
then one has
\begin{eqnarray*}
\left\{
\begin{array}{ll}
HF_{A_{\underline{f}}}(1) = \dim_k (\langle x_1,\ldots,x_m \rangle_k) = m \\
HF_{A_{\underline{f}}}(2) = \dim_k V^{(1)} = hm = t m = n \\
HF_{A_{\underline{f}}}(3) = h = \Tt
\end{array} \right.
\end{eqnarray*}
\item If $\Tt = h+1$, then
\begin{equation*}
n = n-\Tt + h+1 = \tilde{n} +h+1 = h(m-1) +l + h + 1 = hm+l+1.
\end{equation*}
Define
\begin{equation*}
f_{h+1} := \sum_{i=1}^{l+1} x_i^2 x_{i+h}
\end{equation*}
and consider now $\underline{f} = \{f_1,\ldots,f_h,f_{h+1}\}$, where $f_1,\ldots,f_h$ are defined as in (a). Set \\
\begin{equation*}
V^{(2)} :=  \langle \partial f_{h+1} \rangle_k,
\end{equation*}
then one has:
\begin{eqnarray*}
\left\{
\begin{array}{ll}
HF_{A_{\underline{f}}}(1) = \dim_k (\langle x_1,\ldots,x_m \rangle_k) = m \\
HF_{A_{\underline{f}}}(2) = \dim_k (V^{(1)} \oplus V^{(2)}) = (hm) + (l+1) = n \\
HF_{A_{\underline{f}}}(3) = h+1 = \Tt
\end{array} \right.
\end{eqnarray*}
\underline{}\\
\item If $h+2 \ls \Tt \ls h+m-l$, then we enlarge the set of the polynomials already defined in (a) and (b) as follows:
\begin{eqnarray*}
\left\{ \begin{array}{ll}
f_{h+2} := x_{l+2}^2x_{l+h+2} \\
\ldots \\
f_\Tt := x_{l+\Tt-h}^2x_{l+\Tt}
\end{array} \right. 
\end{eqnarray*}
We set \\
\begin{equation*}
V^{(3)}_1 := \langle \partial f_{h+2},\ldots, \partial f_\Tt \rangle_k = \langle x_ix_{i+h} : i=l+2,\ldots,l+\Tt-h \rangle_k.
\end{equation*}
We get 
\begin{eqnarray*}
\left\{
\begin{array}{ll}
HF_{A_{\underline{f}}}(1) = \dim_k (\langle x_1,\ldots,x_m \rangle_k) = m \\
HF_{A_{\underline{f}}}(2) = \dim_k (V^{(1)} \oplus V^{(2)} \oplus V^{(3)}_1) = (hm) + (l+1) + (\Tt-h-1)= \\
\ \ \ \quad \qquad \qquad \ \ \qquad \ \qquad \qquad \ \qquad \qquad = h(m-1)+l + \Tt = \tilde{n} +\Tt = n \\
HF_{A_{\underline{f}}}(3) = \Tt
\end{array} \right.
\end{eqnarray*}
\underline{}\\
\item Let $\Tt > h+m-l$, and consider the set of polynomials $\{f_1,\ldots,f_{h+m-l}\}$ already defined in (a),(b), taking $\Tt$ maximum (equal to $h+m-l$) in the range of (c). Then we have:
\begin{eqnarray*}
\left\{ \begin{array}{ll}
f_{h+2} := x_{l+2}^2x_{l+h+2} \\
\ldots \\
f_{h+m-l} := x_m^2x_h
\end{array} \right. 
\end{eqnarray*}
which are polynomials that involve all the terms of $\call{D}_h$ not yet considered in $f_{h+1}$. Our strategy is to define $f_{h+m-l+1},\ldots,f_\Tt$ each one as an element of $\call{D}_j$, with $j >h$, following the orders $\preceq_j$. In particular, starting from $\call{D}_{h+1}$, we pick all the elements in this set before passing to $\call{D}_{h+2}$, and so on. Set
\begin{equation*}
\overline{n} := n-(h+1)m.
\end{equation*}
Note that
\begin{equation*}
\overline{n} = h(m-1)+l + \Tt - (h+1)m = \Tt - (h+m-l) >0
\end{equation*}
and so we can write
\begin{equation*}
\overline{n} = mr+s,
\end{equation*}
where $r,s \in \g{N}$ and $0 \ls s <m$. In this way we get:
\begin{eqnarray*}
\left\{ \begin{array}{ll} \left. \begin{array}{ll} 
f_{h+2} := x_{l+2}^2x_{l+h+2} \\
\ldots \\
f_{h+m-l} := x_m^2x_h \end{array} \right] \mbox{ we complete $\call{D}_h$ } \\ 
\left. \begin{array}{ll} 
f_{h+m-l+1} := x_1^2x_{h+2} \\
\ldots \\
\ldots \\
\end{array} \right] \mbox{ we complete $\call{D}_{h+1}$ } \\
\ldots \\
\ldots \\
\left. \begin{array}{ll}
\ldots \\
\ldots \\
f_{\Tt - s} := x_m^2x_{h+r} \end{array} \right] \mbox{ we complete $\call{D}_{h+r}$ } 
\end{array} \right. 
\end{eqnarray*}
With these choices we define
\begin{equation*}
V^{(3)}_2 := \langle \partial f_{h+2},\ldots, \partial f_{\Tt-s} \rangle_k = \langle x_ix_{i+j} : i=1,\ldots,m \ j=h,\ldots,h+r \rangle_k.
\end{equation*}
Setting $\underline{f} = \{f_1,\ldots,f_{h+1},f_{h+2},\ldots,f_{\Tt-s}\}$ we get
\begin{eqnarray*}
\left\{
\begin{array}{ll}
HF_{A_{\underline{f}}}(1) = \dim_k (\langle x_1,\ldots,x_m \rangle_k) = m \\
HF_{A_{\underline{f}}}(2) = \dim_k (V^{(1)} \oplus V^{(2)} \oplus V^{(3)}_2) = (hm) + (l+1) + (m-(l+1)+rm) = \\
\ \ \ \quad \qquad \qquad \ \ \qquad \ \qquad \qquad \ \qquad \qquad = m(h+1) + \overline{n}-s = n-s \\
HF_{A_{\underline{f}}}(3) = \Tt - s
\end{array} \right.
\end{eqnarray*}
If $s=0$ we have the required $\Tt$ polynomials. If $s >0$ just define the last $s$ polynomials following the above strategy, that is setting each one of them as a term of $\call{D}_{h+r+1}$ (respecting the order $\preceq_{h+r+1}$):
\begin{eqnarray*}
\left\{ \begin{array}{ll}
f_{\Tt-s+1} := x_1^2x_{h+r+2} \\
\ldots \\
f_\Tt := x_s^2x_{h+r+s+1}
\end{array} \right. 
\end{eqnarray*}
In such a way we can define
\begin{equation*}
V^{(4)} := \langle \partial f_{\Tt-s+1},\ldots, \partial f_\Tt \rangle_k = \langle x_ix_{i+h+r+1} : i=1,\ldots,s  \rangle_k
\end{equation*}
and we get
\begin{eqnarray*}
\left\{
\begin{array}{ll}
HF_{A_{\underline{f}}}(1) = \dim_k (\langle x_1,\ldots,x_m  \rangle_k) = m \\
HF_{A_{\underline{f}}}(2) = \dim_k (V^{(1)} \oplus V^{(2)} \oplus V^{(3)}_2 \oplus V^{(4)}) = n-s + s = n \\
HF_{A_{\underline{f}}}(3) = \Tt
\end{array} \right.
\end{eqnarray*}
\end{compactenum}
Once again $A_{\underline{f}}$ has $h$-vector $H$.
\end{proof}

\begin{pro}
\label{proamm3}
Let $H = (1,m,n,\Tt)$ be an $h$-vector. If $n < \Tt$ then $H$ is the $h$-vector of an Artinian $3$-level local algebra.
\end{pro}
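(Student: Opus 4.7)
My plan is to construct explicit polynomials $f_i = F_i + Q_i$ of degree three whose cubic leading forms $F_i$ control the upper Hilbert data while quadratic perturbations $Q_i$ correct the embedding dimension. A direct computation with the $R$-submodule $M = \langle f_1,\ldots,f_\Tt\rangle_R$ shows that once $F_1,\ldots,F_\Tt$ are linearly independent cubics, any element of $M$ of degree $\ls 2$ must have its degree-$3$ part $\sum_i r_{i,0} F_i$ equal to zero, forcing $r_{i,0}=0$ by linear independence; hence the degree-$2$ leading-form space is exactly $\langle \partial_j F_i\rangle_k$, independent of the $Q_i$. So $HF_{A_{\underline f}}(3)=\Tt$ and $HF_{A_{\underline f}}(2)=n$ are determined purely by the $F_i$. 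In degree one however, $M_1$ contains $\langle \partial^2 F_i\rangle_k$ together with the extra contributions $\sum r_{i,1}\circ Q_i$ coming from syzygies $\sum r_{i,1}\circ F_i = 0$. This extra freedom in $HF(1)$ under local (non-graded) perturbation is what makes the regime $n<\Tt$ achievable, even though it would be obstructed for graded level algebras.

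With this in mind, the first step is to produce $\Tt$ linearly independent cubic forms $F_1,\ldots,F_\Tt \in k[x_1,\ldots,x_{m'}]$ with $\dim\langle \partial_j F_i\rangle_k = n$, where $m' \ls m$ is possibly strictly smaller than the intended embedding dimension $m$. Since $H$ is an $h$-vector, Macaulay's theorem gives $\Tt \ls n^{\langle 2\rangle}$, which is exactly the numerical constraint ensuring the existence of such $F_i$; explicitly one may take a lex-segment subspace $V_2 \subset P_2$ of dimension $n$ (including the squares $x_1^2,\ldots,x_{m'}^2$ to keep the variables alive) and then choose $\Tt$ cubic monomials whose first partials lie in $V_2$ in such a way that they actually span $V_2$. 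The resulting $F_i$'s only involve the $m'$ variables dictated by the lex segment.

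To raise the embedding dimension from $m'$ to $m$ without disturbing $HF(2)$ or $HF(3)$, I would then add quadratic terms $Q_i$ involving the remaining variables $x_{m'+1},\ldots,x_m$. For each missing variable $x_{m'+k}$, the fact that none of the $F_i$'s involves $x_{m'+k}$ produces a trivial syzygy $x_{m'+k}\circ F_i = 0$, so choosing one index $i(k)$ and setting $Q_{i(k)}$ to contain a term such as $\tfrac12 x_{m'+k}^2$ (or $x_1 x_{m'+k}$) yields $x_{m'+k}\circ f_{i(k)} = x_{m'+k}$, a new degree-one element of $M$. Iterating this for $k=1,\ldots,m-m'$ adjusts $HF(1)$ to exactly $m$ while the cubic leading forms remain unchanged, so Proposition~\ref{invsyscorr2} delivers a $3$-level local algebra $A_{\underline f}$ of type $\Tt$ with the prescribed $h$-vector.

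The main obstacle is the combinatorial first step: producing cubic forms whose first partials span \emph{exactly} an $n$-dimensional space rather than a proper subspace, which is delicate when $\Tt$ is strictly between $n$ and $n^{\langle 2\rangle}$. One must argue, based on the Macaulay representation of $n$, that the $\Tt$ monomials retained from the lex-segment construction differentiate collectively onto all of $V_2$ and no more; the extreme case $\Tt = n^{\langle 2\rangle}$ is automatic because one retains every admissible cubic monomial. Once this is settled, the subsequent check that the trivial-syzygy contributions $x_{m'+k}\circ Q_{i(k)}$ are independent of $\langle \partial^2 F_i\rangle_k$ and together yield the required extra variables in $M_1$ is routine.
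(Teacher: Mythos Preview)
Your overall architecture matches the paper's proof exactly: build homogeneous cubics $F_1,\ldots,F_\Tt$ in $m'\ls m$ variables realizing $HF(2)=n$ and $HF(3)=\Tt$, then perturb by squares of the unused variables to raise $HF(1)$ to $m$. Your leading-form/syzygy analysis explaining \emph{why} the quadratic tails leave $HF(2)$ and $HF(3)$ untouched but can raise $HF(1)$ is correct and in fact more conceptual than the paper's direct verification.

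The gap is precisely the one you flag as the ``main obstacle'', and your proposed fix via a lex segment does not work as stated. A lex segment $V_2\subset P_2$ of dimension $n$ does \emph{not} contain the squares $x_1^2,\ldots,x_{m'}^2$ in general, and, more seriously, the number of cubic monomials with all first partials in $V_2$ can be strictly smaller than $n^{\langle 2\rangle}$ if you take the lex segment in too many variables. For instance, with $n=3$ the lex segment $\{x_1^2,x_1x_2,x_1x_3\}$ in three variables supports only three cubic monomials ($x_1^3,x_1^2x_2,x_1^2x_3$), whereas $n^{\langle 2\rangle}=4$; the $h$-vector $(1,2,3,4)$ would then be unreachable by your recipe. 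The bound $\Tt\ls n^{\langle 2\rangle}$ is attained only when $V_2$ is chosen in the \emph{correct} number of variables.

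The paper resolves this by writing $n=\binom{l+1}{2}+h$ with $0\ls h\ls l$ and working in exactly $l$ variables when $h=0$ (taking $V_2=P_2$ entirely, and the first $\Tt$ cubic monomials in lex, which automatically contain $x_1\cdot\g{T}_l^2$ since $\Tt>n$), or in $l+1$ variables when $h>0$ (taking $V_2=\g{T}_l^2\cup\{x_{l+1}^2,x_1x_{l+1},\ldots,x_{h-1}x_{l+1}\}$, which is \emph{not} a lex segment, together with a specific list of cubic monomials whose partials span exactly this set). In both cases the paper checks by hand that $m'\ls m$ follows from $H$ being an $h$-vector, so that enough fresh variables remain for the quadratic perturbation step. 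Once you replace your lex-segment sketch with this explicit construction, your argument is complete and coincides with the paper's.
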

\begin{proof}[\scshape Proof]
Since $H$ is an $h$-vector, it has to satisfy Macaulay's Theorem (see \cite{Stan} for notation). Therefore
\begin{equation*}
\Tt = HF_A(3) \ls HF_A(2)^{\langle2\rangle} = n^{\langle2\rangle}.
\end{equation*}
There exists $l \in \g{N}$, $l \gs 1$ and $h \in\{0,1,2,\ldots,l\}$  such that
\begin{equation*}
n = \binom{l+1}{2} + h,
\end{equation*} 
hence
\begin{equation*}
n^{\langle2\rangle} = \binom{l+2}{3} + \binom{h+1}{2}.
\end{equation*}
Moreover, again for the fact that $H$ is an $h$-vector, it has to be
\begin{equation*}
\binom{l+1}{2} \ls \binom{l+1}{2}+h =n \ls m^{\langle1\rangle} = \binom{m+1}{2}
\end{equation*} 
and so $m \gs l$ and $m=l$ only when $h=0$. 

Let us extend the notation introduced in the proof of Proposition \ref{soc2}:
\begin{equation*}
\g{T}_l^n := \mbox{ Supp} \left( (x_1+\ldots+x_l)^n \right) = \{ x_1^n,x_1^{n-1}x_2,\ldots,x_l^n \}.
\end{equation*}
Notice that $\# \g{T}_l^n = \binom{l+n-1}{n}$. For every $f \in R$ we also introduce the following notation:
\begin{equation*}
f \cdot \g{T}_l^n := \{f t : t \in \g{T}_l^n \}.
\end{equation*}
\underline{}\\
Assume now $h=0$, then $\Tt \ls \binom{l+2}{3}$ because $H$ is an $h$-vector. It is enough to define $\Tt$ forms of degree three $F_1,\ldots,F_\Tt$ as the first $\Tt$ monomials in $\g{T}_l^3$ with respect to Lexicographic order. Notice that $\# \g{T}_l^3 = \binom{l+2}{3}$ and, as said before $\Tt \ls \binom{l+2}{3}$, so this choice is possible. In this way we are selecting $\Tt$ distinct monomials of degree three which generate $n=\binom{l+1}{2}$ linearly independent forms of degree two, in other words we have \\
\begin{eqnarray*}
\left\{
\begin{array}{ll}
HF_{A_{\underline{F}}}(1) = \dim_k ( \langle x_1,\ldots,x_l \rangle_k) = k \\
HF_{A_{\underline{F}}}(2) = \dim_k \g{T}_l^2 = \binom{l+1}{2} = n \\
HF_{A_{\underline{F}}}(3) \dim_k(\langle F_1,\ldots,F_\Tt\rangle_k) = \Tt
\end{array} \right.
\end{eqnarray*}
Being $HF_A(1) = m \gs l$, we can set 
\begin{equation*}
f_1 := F_1 + \sum_{i=l+1}^m x_{i}^2, \quad \qquad  f_j := F_j \ \  \mbox{ for all } j=2,\ldots,\Tt
\end{equation*}
and with these choices $A_{\underline{f}}$ has $h$-vector $\left(1,m, \binom{l+1}{2},\Tt \right) = (1,m,n,\Tt)$, as required. 

Now assume $n=\binom{l+1}{2}+h$ with $0 < h \ls l$ and recall
\begin{equation*}
\# \g{T}_l^2 = \binom{l+1}{2}.
\end{equation*}
In this case we consider $n = \binom{l+1}{2} + h$ forms of degree three $F_1,\ldots,F_n$ as follows. Each of the first $\binom{l+1}{2}$ is selected to be a different monomial in the set $x_1 \cdot \g{T}_l^2 = \{ x_1^3,x_1^2x_2,\ldots,x_1x_l^2\}$ (recall that $\# (x_1 \cdot \g{T}_l^2) = \binom{l+1}{2}$), once again following lexicographic order:
\begin{equation*}
F_1 := x_1^3, \ \ \ F_2 := x_1^2x_2,\ \ \ \ldots\ldots\ \ \ F_{\binom{l+1}{2}} := x_1x_l^2.
\end{equation*}
We define the remaining $h$ in the following way
\begin{equation*}
F_{\binom{l+1}{2} + 1} := x_{l+1}^3, \ \ \ F_{\binom{l+1}{2} + 2} := x_1x_{l+1}^2, \ \ \ \ldots\ldots\ \ \ F_n :=x_{h-1}x_{l+1}^2.
\end{equation*}
These are $n = \binom{l+1}{2}+h$ linearly independent forms of degree three whose derivatives generate a $k$-vector space of dimension exactly $n = \binom{l+1}{2} + h$. In fact:
\begin{eqnarray*}
\dim_k (\langle \partial F_1,\ldots,\partial F_n \rangle_k) = \qquad \qquad \qquad \qquad \qquad\\
= \dim_k(\langle \g{T}_l^2 \rangle_k \oplus \langle x_{l+1}^2, x_1 x_{l+1}, \ldots,x_{h-1}x_{l+1} \rangle_k ) = \binom{l+1}{2} + h = n.
\end{eqnarray*}
Set $\underline{F} = \{F_1,\ldots,F_n\}$, then one has
\begin{eqnarray*}
\left\{
\begin{array}{ll}
HF_{A_{\underline{F}}}(1) = \dim_k (\langle x_1,\ldots,x_{l+1} \rangle_k) = l+1 \\
HF_{A_{\underline{F}}}(2) = \dim_k (\langle \g{T}_l^2 \rangle \oplus \langle x_{l+1}^2, x_1 x_{l+1}, \ldots,x_{h-1}x_{l+1} \rangle_k) = n \\
HF_{A_{\underline{F}}}(3) = \dim_k  (\langle \g{T}_l^3 \rangle_k \oplus \langle x_{l+1}^3,x_1x_{l+1}^2,\ldots,x_{h-1}x_{l+1}^2 \rangle_k) = n
\end{array} \right.
\end{eqnarray*}
Recall that by assumption and by Macaulay's Theorem we have
\begin{equation*}
n < HF_A(3) = \Tt \ls \binom{l+2}{3} + \binom{h+1}{2}.
\end{equation*}
We consider now $\Tt - n$ new polynomials. Notice that
\begin{equation*}
\Tt - n \ls \binom{l+2}{3} + \binom{h+1}{2} - n = \binom{l+1}{3} + \binom{h}{2} = \# \left[ \left(\g{T}_l^3 \smallsetminus \left(x_1 \cdot \g{T}_l^2 \right)\right) \cup \left( x_{l+1} \cdot \g{T}_{h-1}^2 \right)\right].
\end{equation*}
Hence we can define $F_{n+1},\ldots,F_\Tt$ each one as a monomial in $\left[ \left(\g{T}_l^3 \smallsetminus \left(x_1 \cdot \g{T}_l^2 \right)\right) \cup \left( x_{l+1} \cdot \g{T}_{h-1}^2 \right)\right]$, chosen with respect to any order (even randomly). The introduction of these new $\Tt -n$ forms does not modify the value of $HF_A(2)$, in fact
\begin{equation*}
\langle \partial F_{n+1},\ldots,\partial F_\Tt \rangle_k \subseteq \langle \g{T}_l^2  \rangle \oplus \langle x_{l+1}^2, x_1 x_{l+1}, \ldots,x_{h-1}x_{l+1} \rangle_k = \langle  \partial F_1,\ldots,\partial F_n  \rangle_k.
\end{equation*}
As above, being $HF_A(1) = m \gs l+1$, set:
\begin{equation*}
f_1 := F_1 + \sum_{i=l+2}^m x_{i}^2 \quad \qquad f_j := F_j \ \ \mbox{ for all } j=2,\ldots,\Tt
\end{equation*}
With this choices $A_{\underline{f}}$ has $h$-vector $H$.
\end{proof}
\noindent
Propositions \ref{prop1}, \ref{proamm1}, \ref{proamm3} complete the proof of Theorem \ref{mainteor}. 

\medskip

If  we restrict the investigation to codimension $m = HF_A(1) = 3$, following our strategy,  we construct graded level algebras in all cases except for the $h$-vectors:
\begin{equation*}
(1,3,2,1) \qquad \qquad \qquad (1,3,2,2) \qquad \qquad \qquad (1,3,3,4).
\end{equation*}
A priori this fact does not mean that these cannot be admissible for a graded level algebra, but actually this is the case. The first one is clearly not admissible as the $h$-vector of a graded Gorenstein algebra, because it is not symmetric. As regarding the other two, a reason comes from a result proved by Peeva \cite{Peeva} on the consecutive cancellations in a free resolution of the associated  lex segment ideal. See also \cite{GeramitaMigliore}. 

Our strategy does not give a characterization of the $h$-vectors admissible for level graded algebras. In fact the  condition  $n \gs \max\{\Tt,m\}$ of Proposition \ref{proamm1}   is sufficient, but not necessary if the embedding dimension is $m \gs 4 $ as the following example shows.

\medskip 
 
\begin{ese}
\label{es1}
Consider $H = (1,4,5,6)$. In this case $n = 5 < t = 6$ hence, following the construction of Proposition \ref{proamm3}, we exhibit a non-graded example.  
However, if we consider  the following homogeneous polynomials:
\begin{equation*}
g_1  = x_1^3, \ \ \ g_2 :=x_1^2x_2, \ \ \ g_3 :=x_1x_2^2, \ \ \ g_4 :=x_2^3, \ \ \ g_5 :=x_3^3, \ \ \ g_6 :=x_4^3
\end{equation*}
the $h$-vector of $A_{\underline{g}}$ is  $H = (1,4,5,6)$ and $A_{\underline{g}}$ is a graded level algebra.
\end{ese}
\medskip 
If $n < \min \{\Tt, m\}, $  the $h$-vector is non-unimodal and the construction of a   graded level algebra is in general not trivial.

\begin{ese}
\label{es2}
Consider $(1,13,12,13)$, we want to prove that it can be the $h$-vector of a level graded algebra even if, following our strategy, we end up with a non-homogeneous ideal for this $h$-vector. Set
\begin{equation*}
F :=a_1x^3+a_2x^2y+a_3x^2z+a_4xy^2+a_5xyz+a_6xz^2+a_7y^3+a_8y^2z+a_9yz^2 + a_{10}z^3.
\end{equation*}
Then $A_F$ has non-unimodal $h$-vector $(1,13,12,13,1)$. Hence $A_{\underline{\partial{F}}}$, which is obtained by truncation from $A_F$, has $h$-vector $(1,13,12,13)$. This non-unimodal Gorenstein example was first given by Stanley. 

\end{ese}

Also if $m>n \gs \Tt$ it is not immediate to construct graded examples, but it is possible to get one starting from Example \ref{es2}.
\begin{ese}
\label{es3}
Consider $H = (1,15,14,14)$. It satisfies the condition of Theorem \ref{mainteor}, hence it is admissible for a level local $k$-algebra. Indeed we construct, in Proposition \ref{prop1}, non homogeneous inverse system polynomials for this $h$-vector. However $H$ is admissible also for a graded level algebra. Set:
\begin{equation*}
F :=a_1x^3+a_2x^2y+a_3x^2z+a_4xy^2+a_5xyz+a_6xz^2+a_7y^3+a_8y^2z+a_9yz^2 + a_{10}z^3,
\end{equation*}
then the truncation $A_{\underline{\partial{F}}}$ of $A_F$ has $h$-vector $(1,13,12,13)$, as shown in Example \ref{es2}. Consider
\begin{equation*}
G:=y_1^3 + y_2^3,
\end{equation*}
then $A_G$ has $h$-vector $(1,2,2,1)$. If we set $\underline{E}:=\{ \underline{\partial{F}},G\}$ ,then $A_{\underline{E}}$ is graded and has $h$-vector:
\begin{equation*}
(1,13,12,13) + (1,2,2,1) = (1,15,14,14).
\end{equation*}
\end{ese}
\vskip 5mm
{\bf Acknowledgements.} We warmly thank Professor M. E. Rossi for her comments and suggestions which improved the exposition of this article. We also wish to thank the anonymous referee for his careful review.
\vskip 1.5cm

\end{document}